\documentclass[12pt]{amsart}
\usepackage[margin=1.2in]{geometry}

\usepackage{amsfonts}
\usepackage{amssymb}
\usepackage{amsmath}
\usepackage{amsthm}
\usepackage{mathtools}
\usepackage{xcolor}
\usepackage{hyperref}

\theoremstyle{plain}
\newtheorem{theorem}{Theorem}[section]
\newtheorem{lemma}[theorem]{Lemma}
\newtheorem{proposition}[theorem]{Proposition}

\newtheorem{conjecture}[theorem]{Conjecture}

\theoremstyle{definition}
\newtheorem{definition}[theorem]{Definition}

\newtheorem{remark}[theorem]{Remark}

\numberwithin{equation}{section}

\newcommand{\N}{\mathbb{N}}

\newcommand{\R}{\mathbb{R}}
\newcommand{\C}{\mathbb{C}}

\title[Strong factorization theorem for smooth vectors]{Strong factorization theorem for smooth vectors of exponential solvable Lie group representations}

\author[S. Chaves]{Santiago Chaves}
\address{S. Chaves\\ Centro de Investigaci\'{o}n en Matemática Pura y Aplicada\\ Escuela de Matem\'{a}tica\\Universidad de Costa Rica\\Montes de Oca\\ San Jos\'{e}\\
Costa Rica}
\email{anthony.chavesaguilar@ucr.ac.cr}
\thanks{This work was supported by the University of Costa Rica through the  project ``Avances en la teoría de factorización de vectores suaves y analíticos de representaciones de grupos de Lie" via Fondo Semilla 2024 from Vicerrector\'{i}a de Investigaci\'{o}n.}

\thanks {S. Chaves, A Hernández, and R. Zamora acknowledge Escuela de Matemática and Centro de Investigación en Matemática Pura y Aplicada at Universidad de Costa Rica for their support via project: C4450.}

\author[A. Debrouwere]{Andreas Debrouwere}
\address{A. Debrouwere, Department of Mathematics and Data Science \\ Vrije Universiteit Brussel, Belgium\\ Pleinlaan 2 \\ 1050 Brussels \\ Belgium}
\email{andreas.debrouwere@vub.be}
\thanks{A. Debrouwere acknowledges support from the Research Foundation-Flanders through the grant G0A1O26N}

\author[A. Hern\'{a}ndez]{Alberto Hern\'{a}ndez Alvarado}
\address{A. Hern\'{a}ndez\\ Centro de Investigaci\'{o}n en Matemática Pura y Aplicada\\ Escuela de Matem\'{a}tica\\Universidad de Costa Rica\\Montes de Oca\\ San Jos\'{e}\\
Costa Rica}
\email{albertojose.hernandez@ucr.ac.cr}

\author[J. Vindas]{Jasson Vindas}
\address{J. Vindas, Department of Mathematics: Analysis, Logic and Discrete Mathematics\\ Ghent University\\ Krijgslaan 297\\ 9000 Ghent\\ Belgium}
\email{jasson.vindas@UGent.be}
\thanks {J.~Vindas acknowledges support from the Research Foundation-Flanders through the grants G067621N and K801226N}

\author[R. Zamora]{Rafael Zamora}
\address{R. Zamora\\ Centro de Investigaci\'{o}n en Matemática Pura y Aplicada\\ Escuela de Matem\'{a}tica\\Universidad de Costa Rica\\Montes de Oca\\ San Jos\'{e}\\
Costa Rica}
\email{rafael.zamora\_c@ucr.ac.cr}

\subjclass[2020]{\emph{Primary.} 22E45;  46E25. \emph{Secondary.}  42A85; 43A65;  46E40}
\keywords{Dixmier-Malliavin type theorems; Lie group representations; smooth vectors; strong factorization; exponential solvable Lie groups}

\begin{document}

\begin{abstract}

We establish new strong factorization properties for the smooth vectors of representations of exponential solvable Lie groups on Fr\'{e}chet spaces. 
    In particular, our results improve upon the Dixmier-Malliavin factorization theorem for simply connected nilpotent Lie groups.
\end{abstract}

\maketitle

\section{Introduction}
In their seminal paper \cite{D-M}, Dixmier and Malliavin established a number of important factorization properties for smooth vectors associated with Lie group representations. Their work was motivated by Ehrenpreis' factorization problem \cite{ehrenpreis} for the Schwartz space of compactly supported smooth functions on $\mathbb{R}^{n}$. Ehrenpreis' question turned out to be a fascinating challenging problem \cite{D-M,rst}, whose final solution was only found until 1998 by Yulmukhametov \cite{yul}.

 Let $\pi$ be a representation of a (real) Lie group $G$ on a Fréchet space $E$. 
The space of smooth vectors $E^{\infty}=E^{\infty}_{\pi}$ of $\pi$ consists of all those vectors $v\in E$ whose orbit map $g \mapsto \pi(g)v$ is a smooth $E$-valued function on $G$; it carries a canonical Fr\'{e}chet space topology (see Subsection \ref{subsect representations}). We also consider the space $\mathcal{D}(G)$ of compactly supported smooth functions on the group.
The representation $\pi$ induces a natural action $\Pi$ of $\mathcal{D}(G)$ on the Fr\'{e}chet space $E^{\infty}$, given by 
\begin{equation}
    \label{equ:action}
    \Pi(\chi)v 
    = \int _{G} \chi(g) \pi(g) v \: d{g},
\end{equation}
for all $\chi\in \mathcal{D}(G)$ and $v\in E^{\infty}$, where here $dg$ is a left-invariant Haar measure. Dixmier and Malliavin showed \cite{D-M} that $E^{\infty}$ always has the so-called weak factorization property with respect to the convolution algebra $\mathcal{D}(G)$, that is, $E^{\infty} =  \operatorname*{span}(\Pi(\mathcal{D}(G)) E^{\infty}).$  
 
When the representation $\pi$ satisfies additional moderate growth conditions, the definition \eqref{equ:action} of the action $\Pi$ still makes sense for larger smooth function algebras than just $\mathcal{D}(G)$. This is the case for example if $\pi$ is a bounded representation of a nilpotent Lie group on a Banach space (namely, $\sup_{g\in G} \|\pi(g)\|_{L(E,E)}<\infty$) and $\chi$ belongs to the Schwartz space $\mathcal{S}(G)$. The Dixmier-Malliavin strong factorization theorem for simply connected nilpotent Lie groups is the next statement, which generalizes a result of Howe \cite{H} by removing the irreducibility assumption on the representation.

\begin{theorem}[{\cite[7.4 Th\'{e}or\`{e}me]{D-M}}]\label{D-M theorem} 
Let $\pi$ be a unitary representation of a simply connected nilpotent Lie group $G$ on a Hilbert space $E$. Then, its space of smooth vectors $E^{\infty}$ has the strong factorization property with respect to the Schwartz space $\mathcal{S}(G)$, namely, $E^{\infty}=\Pi(\mathcal{S}(G)) E^{\infty}=\{\Pi(\chi)v: \: \chi\in\mathcal{S}(G) \mbox{ and }v\in E^{\infty}\}$. 
\end{theorem}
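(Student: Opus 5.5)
To prove the Dixmier--Malliavin strong factorization theorem, I would reduce it to an abelian one-parameter statement and induct along a strong Malcev basis of $\mathfrak g$. Since $G$ is simply connected nilpotent, $\exp$ is a diffeomorphism. Choosing a strong Malcev basis $X_1,\dots,X_n$, the map $(t_1,\dots,t_n)\mapsto \exp(t_1X_1)\cdots\exp(t_nX_n)$ is a polynomial diffeomorphism $\mathbb R^n\to G$. This map carries Lebesgue measure to Haar measure, and it identifies $\mathcal S(G)$ with $\mathcal S(\mathbb R^n)$. Moreover, convolving one-parameter Schwartz "measures" $\varphi_1(t_1)\,dt_1$ along $\exp(tX_1)$, \dots, $\varphi_n(t_n)\,dt_n$ along $\exp(tX_n)$ gives an element of $\mathcal S(G)$, namely $\chi=\varphi_1\otimes\cdots\otimes\varphi_n$ in these coordinates. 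We then have $\Pi(\chi)=\Pi_1(\varphi_1)\cdots\Pi_n(\varphi_n)$, where $\Pi_j(\varphi)=\int_{\mathbb R}\varphi(t)\pi(\exp tX_j)\,dt$. Unitarity ensures that all these integrals converge on $E$, and on $E^\infty$ as well, since $\pi$ restricted to $E^\infty$ has polynomial growth: $\operatorname{Ad}$ is polynomial in nilpotent groups.

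The key one-variable step is the following lemma. Let $\sigma(t)$ be a one-parameter group on a Fréchet space $F$ with polynomially bounded growth, with generator $A$. Let $v$ be a vector that is smooth for $\sigma$ in a countable family of seminorms. Then $v=\Pi_\sigma(\varphi)w$ with $\varphi\in\mathcal S(\mathbb R)$ and $w$ still smooth. I would prove it as in Dixmier--Malliavin. Pick a rapidly decaying sequence $c_k>0$ such that $\sum_k c_k (-1)^k A^{2k}v$ converges in every seminorm; this is possible by a diagonal choice, since $v$ is smooth. Put $w=\sum c_k(-1)^kA^{2k}v$, formally $w=P(A)v$ with $P(\xi)=\sum c_k\xi^{2k}$. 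Let $\varphi$ be the inverse Fourier transform of $1/P(\xi)$. We need $1/P$ to extend to a Schwartz-type (even analytic in a strip) function with decaying derivatives. The lemma of Dixmier--Malliavin provides exactly this: for any sequence of growth constraints one can choose $c_k$ so that $\varphi=\mathcal F^{-1}(1/P)$ is in $\mathcal S$ (indeed in $\mathcal D$ up to a smooth remainder). For instance, choose $c_k$ so that $P$ is entire with zeros off the real axis, and control $1/P$ and its derivatives by Cauchy estimates on horizontal lines. Then $\Pi_\sigma(\varphi)P(A)v=v$, justified by integrating by parts in $t$ using $\widehat{\varphi}\,P=1$.

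To globalize, I apply the lemma successively in the order $j=n,n-1,\dots,1$, taking $F=E^\infty$ with its seminorms $\|d\pi(u)\cdot\|$, $u\in U(\mathfrak g)$. The main obstacle is to ensure that each new vector $w$ is again in $E^\infty$ for the whole group, not merely smooth along $X_j$. For this, I choose the $c_k$ so that the series $\sum c_k (-1)^k d\pi(X_j)^{2k}v$ converges in all seminorms $\|d\pi(u)\,\cdot\|$ simultaneously; this countable family of conditions can still be met by a diagonal argument. The commutation $d\pi(u)d\pi(X_j)^{2k}=\sum$ of terms with $\operatorname{ad}$-nilpotency bounds keeps these norms finite for smooth $v$. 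Closedness of $d\pi(u)$ then gives $w\in E^\infty$. The identity $\Pi_j(\varphi)w=v$ holds in $E^\infty$.

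After $n$ steps we obtain $v=\Pi_1(\varphi_1)\cdots\Pi_n(\varphi_n)w=\Pi(\chi)w$ with $w\in E^\infty$ and $\chi\in\mathcal S(G)$. Finally, I need that the tensor product $\varphi_1\otimes\cdots\otimes\varphi_n$ transported by the polynomial coordinates lies in $\mathcal S(G)$, and that the product of the operators equals $\Pi(\chi)$ with $d g=dt_1\cdots dt_n$. Both facts are standard for Malcev coordinates.
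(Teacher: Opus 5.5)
Your argument is correct in outline. It is essentially Dixmier and Malliavin's own proof, but it is not the paper's route. The paper never proves Theorem~\ref{D-M theorem} directly; it recovers it from Theorem~\ref{th: fth}, since nilpotent groups are exponential solvable, unitary representations are of exponential type, and $\mathcal{K}(G)\subset\mathcal{S}(G)$.

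The global skeleton matches yours. The paper works on the Fr\'echet space $E^\infty$, factors successively along the one-parameter subgroups of a coexponential basis followed by a Malcev basis, and pushes $\varphi_1\otimes\cdots\otimes\varphi_{n+m}$ forward through second-kind coordinates (Proposition~\ref{Haar}, Theorem~\ref{pushforward}). The difference is the one-variable step.

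You apply an infinite-order operator $P(A)=\sum_k c_k(-1)^kA^{2k}$ to the vector and take $\varphi=\mathcal{F}^{-1}(1/P)$. The paper instead does the following:
\begin{itemize}
\item it Fourier transforms the orbit map $\gamma_v$ as an $E$-valued Silva tempered ultrahyperfunction;
\item from its analytic representation, uniformly over the bounded set, it extracts a weight $\sigma$;
\item it takes a zero-free entire $Q$ with $e^{a\sigma(\operatorname{Re} z)}\lesssim|Q(z)|\lesssim e^{\sigma(\operatorname{Re} z)}$ on every horizontal strip;
\item it sets $h^v=\mathcal{F}^{-1}(Q\cdot\widehat{\gamma_v})$, $w=h^v(0)$ and $\chi=\mathcal{F}^{-1}(1/Q)$.
\end{itemize}

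This is what buys $\mathcal{K}$. For a general smooth vector, your admissible coefficients can be forced to decay so fast that $P$ has order zero. Then $P$ must have zeros, $1/P$ is holomorphic only in a bounded strip, and $\varphi\notin\mathcal{K}(\mathbb{R})$. That is harmless here, because $\pi$ restricted to $E^\infty$ grows polynomially when $G$ is nilpotent. It would not suffice for exponential solvable groups, where $\operatorname{Ad}$ can make the exponential growth rates on $E^\infty$ unbounded over the seminorms. The price the paper pays is that $Q$ is not a convergent power series in the generator, so $Q\cdot\widehat{\gamma_v}$ has to be defined in $\mathcal{U}'_{(\sigma)}(\mathbb{C},E)$. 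Your route is more elementary. If you choose the constraints on the $c_k$ uniformly over $B$, it also gives bounded factorization with $\mathcal{S}$.

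A few details need tightening.
\begin{itemize}
\item ``Zeros off the real axis'' does not justify the Cauchy estimates. You need a zero-free strip on which $|P(z)|\gtrsim P(\operatorname{Re} z)$. For example, take $P(z)=\prod_k(1+z^2/m_k^2)$ with $m_k$ increasing fast enough that its positive Taylor coefficients satisfy $c_k\le r_k$.
\item Integration by parts only gives $\Pi(\varphi)P_N(A)v=\Pi(\varphi_N)v$ with $\widehat{\varphi_N}=P_N/P$. Passing to $N\to\infty$ needs an argument. In the unitary case this is the spectral theorem on $E$ plus dominated convergence, using $0<P_N/P\le 1$ on $\mathbb{R}$. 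It is legitimate because $P_N(A)v\to w$ in $E^\infty$, hence in $E$.
\item Factoring along $X_n$ first produces $\Pi_n(\varphi_n)\cdots\Pi_1(\varphi_1)w$, not $\Pi_1(\varphi_1)\cdots\Pi_n(\varphi_n)w$. Either start with $X_1$, or note that the reversed product is also a polynomial chart carrying Lebesgue measure to Haar measure, since $G$ is unimodular.
\item Your ``main obstacle'' is not one. Once you work on $F=E^\infty$, convergence of the series in $F$ already gives $w\in E^\infty$, so no commutator estimates are needed.
\end{itemize}
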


The aim of this article is to extend and improve Theorem \ref{D-M theorem}. Our main result is the following strong bounded factorization theorem for exponential solvable Lie group representations. We refer to Subsection \ref{subsec exp decreasing} below for the precise definition of the algebra of exponentially rapidly decreasing smooth functions $\mathcal{K}(G)$ on $G$, and to Subsection \ref{subsect representations} for that of representations of exponential type on Fr\'{e}chet spaces.

\begin{theorem}
\label{th: fth}
Let $\pi$ be a representation of exponential type of an exponential solvable Lie group $G$ on a Fr\'{e}chet space $E$. Then, given a bounded subset $B \subset E^{\infty}$, one can always find $\chi\in \mathcal{K}(G)$ and another bounded subset $B'\subset E^{\infty}$ such that $B=\Pi(\chi)(B')$. In particular, $E^{\infty}=\Pi(\mathcal{K}(G))E^{\infty}.$
\end{theorem}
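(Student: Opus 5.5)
The plan is to reduce the statement to a one-dimensional strong factorization problem and then iterate along a coexponential basis. Since $G$ is exponential solvable, there is a basis $X_1,\dots,X_n$ of the Lie algebra $\mathfrak g$ such that the map
\[
(t_1,\dots,t_n)\mapsto \exp(t_1X_1)\cdots\exp(t_nX_n)
\]
is a diffeomorphism $\mathbb R^n\to G$. We choose the basis so that each $\mathfrak g_k=\operatorname{span}\{X_k,\dots,X_n\}$ is an ideal of $\mathfrak g_{k-1}$ (a Malcev-type flag). In these coordinates, Haar measure is, up to a smooth positive density of exponential type, Lebesgue measure. The space $\mathcal K(G)$ should correspond to smooth functions all of whose derivatives decay faster than every exponential $e^{-\lambda|t|}$. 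The point is that products of one-parameter functions in $\mathcal K(\mathbb R)$ convolve into elements of $\mathcal K(G)$: if $\chi_k\in\mathcal K(\mathbb R)$, then
\[
\Pi(\chi_1\otimes_{X_1}\cdot)\cdots = \int \chi_1(t_1)\cdots\chi_n(t_n)\,\pi(\exp t_1X_1)\cdots\pi(\exp t_nX_n)\,dt,
\]
and the function $\chi(\exp t_1X_1\cdots\exp t_nX_n)=\chi_1(t_1)\cdots\chi_n(t_n)\,\Delta(t)^{-1}$ lies in $\mathcal K(G)$. Checking this last claim requires verifying that left/right invariant vector fields applied to such a function grow at most exponentially in the coordinates. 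This should follow because, for exponential groups, the coordinates of products and the coefficients of $\operatorname{Ad}$ are bounded by $e^{C|t|}$.

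The core one-parameter step is as follows. Let $\sigma(t)=\pi(\exp tX)$ be a one-parameter group on a Fréchet space $F$ with $\|\sigma(t)\|_p\le C_p e^{\omega_p|t|}$ seminormwise; this is what "exponential type" gives. We claim that every bounded $B\subset F^\infty_\sigma$ equals $\sigma(\chi)B'$ with $\chi\in\mathcal K(\mathbb R)$ and $B'$ bounded in $F^\infty_\sigma$. The plan is to use an ultradifferential-operator parametrix. Take an even entire function $P(z)=\prod_j(1+z^2/m_j^2)$ with $m_j\to\infty$ fast enough that $P(i\,d/dt)$ acts on $B$, meaning $\sum_j \sup_{v\in B}p(D^jv)\,a_j<\infty$ for the Taylor coefficients $a_j$ of $P$. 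Because $B$ is bounded, $\sup_{v\in B}p(X^jv)\le c_{p,j}$ for each $j$. A diagonal choice of $m_j$ (growing fast relative to all $c_{p,j}$, countably many seminorms) makes $P(D)$ converge on $B$, and also on $D^kB$ for each $k$. We then set $\chi=\mathcal F^{-1}(1/P)$. Since $1/P$ is holomorphic in any strip $|\operatorname{Im}z|<m_1$, and all its derivatives decay there, by shifting contours $\chi$ decays like $e^{-m_1|t|}$. To get decay faster than every exponential, we instead use $P(z)=\prod_j(1+z^2/m_j^2)$ composed with the observation that $1/P$ has poles at $\pm i m_j$. We then take $\chi=\mathcal F^{-1}(1/P)\cdot$ a correction, or more simply pick $\chi$ so that $\hat\chi\,P$ is entire. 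Concretely, we use the standard Dixmier–Malliavin trick $v=\sigma(\chi)P(D)v+\sigma(\psi)v$ with $\psi\in\mathcal D$. Writing $B'=P(D)B$ and $B''=B$, this yields two terms. These are merged into a single one by applying the same identity once more and using that $E^\infty$ is a $\mathcal K$-module, which produces $B=\sigma(\chi_1)B_1+\sigma(\chi_2)B_2$. Finally we apply the classical "sum to single" device of Dixmier–Malliavin on $F\times F$ to get a single factorization.

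We then iterate. Applying the one-parameter step successively to $X_n,\dots,X_1$ (each time the new bounded set is bounded in $E^\infty$, since $P(X_k)$ maps bounded sets of $E^\infty$ into bounded sets of $E^\infty$ by the convergence choice made uniformly over all derivatives $X^\alpha$) yields
\[
B=\sigma_1(\chi_1)\cdots\sigma_n(\chi_n)B'.
\]
The first paragraph then identifies this as $\Pi(\chi)B'$ with $\chi\in\mathcal K(G)$. The main obstacle is obtaining superexponential decay of $\chi$ and not just decay of order $e^{-m_1|t|}$. Here we will choose the $m_j$ so that $\hat\chi=1/P$ extends holomorphically to arbitrarily wide strips after removing finitely many poles with a compactly supported correction. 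Equivalently, at each order $\lambda$ we absorb the poles below $\lambda$ into the $\mathcal D$ part, and sum with a diagonal argument. This step, together with the control of $\Pi(\chi)$ composition in exponential coordinates, is where the real work lies.
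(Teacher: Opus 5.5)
Your global architecture is the paper's. You factor along one-parameter subgroups, iterate along exponential coordinates of the second kind, and transport the product of the one-variable kernels to an element of $\mathcal{K}(G)$. The paper uses a coexponential basis to $[\mathfrak{g},\mathfrak{g}]$ followed by a Malcev basis of it; this makes the Haar density constant and gives the pulled-back left- and right-invariant fields coefficients in $\mathcal{O}_M^{\exp}(\mathfrak{g})$. Your sketch of that part is in line with this.

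The genuine gap is the one-parameter step, which is the heart of the theorem, and it cannot be closed along your route.

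\begin{itemize}
\item An exact inversion $\widehat{\chi}\,P=1$ with $\chi\in\mathcal{K}(\mathbb{R})$ forces $\widehat{\chi}$, hence $1/P$, to be entire, i.e.\ $P$ must be zero-free.
\item Your $P=\prod_j(1+z^2/m_j^2)$ vanishes at $\pm i m_j$. So $\mathcal{F}^{-1}(1/P)$ decays only like $e^{-m_1|t|}$, which does not even define $\sigma(\chi)$ on $F$ when the rates $\omega_p$ are unbounded in $p$.
\item No other power series helps. A bounded set may have $\sup_{v\in B}p(D^jv)$ growing faster than $(j!)^k$ for every $k$. Then convergence of $\sum_j a_jD^j$ on $B$ forces $P$ to have order $0$, and a zero-free entire function of order $0$ is constant.
\item Your fallback, the Dixmier--Malliavin parametrix $\delta=P(D)\chi+\psi$, gives only a two-term (weak) factorization. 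There is no ``sum to single'' device turning $\sigma(\chi_1)B_1+\sigma(\chi_2)B_2$ into $\sigma(\chi)B'$; passing from weak to strong factorization is exactly the hard point. For $\mathcal{D}(\mathbb{R})$ the two-term statement is the Dixmier--Malliavin theorem, whereas the one-term one is Ehrenpreis' problem, answered negatively by Yulmukhametov.
\item Absorbing ``the poles below $\lambda$'' separately for each $\lambda$ does not produce one kernel either, since $1/P$ has infinitely many poles escaping to infinity.
\end{itemize}

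The missing idea, supplied in Section~\ref{sec: factorization real line}, is a zero-free entire multiplier used as a Fourier multiplier rather than as a power series in $D$.

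\begin{itemize}
\item From $B$ one extracts uniform strip estimates for the Fourier--Laplace transforms $F^v_n$ of the orbit maps; these give the constants $C_{n,k}$ and the functions $\tau_n$.
\item One chooses a weight function $\sigma$ with $\sigma=o(\tau_n)$ for every $n$ (Lemma~\ref{l: DM w}).
\item One takes an entire $Q$ with $e^{a\sigma(\Re e\, z)}\lesssim|Q(z)|\lesssim e^{\sigma(\Re e\, z)}$ on every strip $|\Im m\, z|\le n$ (Lemma~\ref{l: DM we}).
\item Then $\widehat{\chi}=1/Q\in\mathcal{U}(\mathbb{C})$, so $\chi\in\mathcal{K}(\mathbb{R})$ outright.
\item Meanwhile $h^v=\mathcal{F}^{-1}(Q\cdot\widehat{\gamma_v})$ makes sense in $\mathcal{U}'_{(\sigma)}(\mathbb{C},E)$, and the choice of $\sigma$ makes $\{h^v: v\in B\}$ bounded in $\mathcal{O}_C^{\exp}(\mathbb{R},E)$.
\item Translation equivariance gives $h^v=\gamma_{h^v(0)}$, hence $B=\Pi(\check{\chi})(\{h^v(0): v\in B\})$ with a single kernel.
\end{itemize}

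Without this, or an equivalent one-dimensional strong bounded factorization, your iteration has nothing to iterate.
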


Besides holding for broader classes of representations on more general Lie groups and yielding simultaneous strong factorization for elements in bounded sets, Theorem \ref{th: fth} also significantly improves upon  Theorem \ref{D-M theorem} in the particular case of simply connected nilpotent Lie groups $G$ by employing the much finer convolution algebra $\mathcal{K}(G)$, rather than the larger Schwartz space $\mathcal{S}(G)$ occurring in the Dixmier-Malliavin strong factorization theorem.

We will divide the proof of Theorem \ref{th: fth} into several intermediate steps. In Section \ref{sec: factorization real line}, we show the result for the special case of the real line $G=\mathbb{R}$. The general case of exponential solvable Lie groups will then be reduced to that of the real line in Section \ref{sec factorization exponential} by combining an iteration method due to Dixmier and Malliavin with results connecting $\mathcal{K}(G)$ and the convolution algebra $\mathcal{K}(\mathfrak{g})$ on the Lie algebra $\mathfrak{g}$ via special exponential coordinates of the second kind. The latter tools shall be established in Section \ref{sec exp coordinates}. 

We end this introduction by mentioning that it would be interesting to show Theorem \ref{th: fth} for other classes of Lie groups. In fact, we actually conjecture that the strong bounded factorization property holds true for any  connected Lie group:

\begin{conjecture}
\label{conjecture smooth} The space of smooth vectors $E^{\infty}$ associated with a representation of exponential type of a connected Lie group $G$ on a Fr\'{e}chet space has the strong bounded factorization property with respect to  $\mathcal{K}(G)$.
\end{conjecture}

Conjecture \ref{conjecture smooth} is the smooth counterpart of another conjecture due\footnote{In \cite{G-K-L}, the authors were able to prove that the analog of the Dixmier-Malliavin \emph{weak} factorization theorem holds for analytic vectors.} to  Gimperlein, Kr\"{o}tz, and Lienau for analytic vectors, which we state here in a slightly strengthened form. See \cite{G-K-L} for the definition of the algebra $\mathcal{A}(G)$ of exponentially rapidly decreasing analytic functions on $G$.

\begin{conjecture}[{\cite[Conjecture 6.4]{G-K-L}}]
\label{conjecture analytic} The space of analytic vectors $E^{\omega}$ associated with a representation of exponential type of a connected Lie group $G$ on a Fr\'{e}chet space has the strong bounded factorization property with respect to  $\mathcal{A}(G)$.
\end{conjecture}

Conjecture \ref{conjecture smooth} is known to hold true for Lie groups whose Lie algebra admits a basis of toroidal elements as explained in \cite[Section 4]{D-M} (for instance, compact Lie groups are of this kind), and Theorem \ref{th: fth} proves it for exponential solvable Lie groups. On the other hand, Conjecture \ref{conjecture analytic} has been established for $G=\mathbb{R}^{n}$ \cite{DPV21} and for compact Lie groups \cite{D-H-V25}.

\section{Preliminaries}
\label{sec: preliminaries}

Throughout the article $E$ stands for a Fr\'{e}chet space and $G$ for an exponential solvable Lie group, whose identity element is denoted by $e\in G$. We recall \cite{F-LBook} that by definition $G$ is simply connected, its Lie algebra $\mathfrak{g}$ is solvable, and the exponential function $\exp: \mathfrak{g}\to G$ is a global diffeomorphism. We write $\mathfrak{D}(G)$ for the algebra of all left-invariant differential operators on $G$.
We regard the elements of the Lie algebra of $G$ as left-invariant vector fields, so that $\mathfrak{g}\subset\mathfrak{D}(G)$. We denote by  $\iota:G\to G$ the inversion map, namely, $\iota(g)=g^{-1}$. Observe then that $\iota_{\ast}(\mathfrak{D}(G))$ is the algebra of right-invariant differential operators. We fix a left-invariant Haar measure $dg$ on $G$. We write  $\operatorname*{csn}(E)$ for the family of all continuous seminorms on $E$.

\subsection{Exponentially rapidly decreasing functions on a Lie group}\label{subsec exp decreasing} We fix a left-invariant Riemann metric on $G$ and denote its associated distance function as $d: G\times G\to [0,\infty)$. The latter metric is also left-invariant: 
$$d(g_3g_1,g_3g_2)=d(g_1,g_2), \qquad \mbox{for any } g_1, g_2, g_3\in G.
$$
We then use $d$ to define the function $|\cdot|_{G}= G\to [0,\infty)$ as the distance of an element to $e$, namely, 
$$|g|_G:=d(e,g), \qquad g\in G.$$
One readily verifies \cite{G60} that $|g|_G$  satisfies the inequality 
\begin{equation}
\label{eq submult} 
|g h|_G\leq |g|_G+|h|_G, \qquad \mbox{for any } g,h\in G.
\end{equation}
Furthermore, obviously, $|g |_G=|g^{-1}|_G$.

 The space of exponentially rapidly decreasing smooth functions on $G$ is defined as 
$$\mathcal{K}(G)=\Big\{\varphi\in C^{\infty}(G):\: (\forall \lambda>0)(\forall L\in\mathfrak{D}(G))\ \sup_{g\in G} e^{\lambda|g|_G}(|(L\varphi)(g)|+ |((\iota_{\ast}L)\varphi)(g)|)<\infty\Big\}.$$
(This function space coincides with the Schwartz space with respect to the maximal scale structure on the group in the terminology from \cite{B-K}.) 
Note that $\mathcal{K}(G)$ can be provided with a natural Fr\'{e}chet space structure. In fact, if $\{L_1,L_2,L_3,L_4,\dots\}$ is a basis for $\mathfrak{D}(G)$, its canonical Fr\'{e}chet space topology is generated by the countable family of seminorms
$$
\sup_{g\in G} e^{j|g|_G}(|(L_k\varphi)(g)|+ |((\iota_{\ast}L_k)\varphi)(g)|), \qquad j,k\in\mathbb{N}.
$$
The space $\mathcal{K}(G)$ is invariant under both left- and right-translations and it is a Fr\'{e}chet algebra with respect to both left- and right-convolution. 

As $\mathfrak{g}$ is itself a Lie group under its vector addition, it also makes sense to consider the space $\mathcal{K}(\mathfrak{g})$ and in this case $| \cdot |_{\mathfrak{g}}$ is a norm on the vector space $\mathfrak{g}$.

\subsection{Representations of exponential type}\label{subsect representations}
The space of smooth vectors $E^{\infty}$ of a representation $\pi$ of $G$ on the space $E$ carries a natural Fr\'{e}chet space topology, supplied by the seminorms
\[
p_{L}(v)=p(\pi(L)v), \qquad v\in E^{\infty}, \: p\in\operatorname*{csn}(E), \: L\in \mathfrak{D}(G),
\]
As usual, here $\pi(L)$ arises from the infinitesimal action that the representation induces on $\mathfrak{g}$, that is, explicitly $\pi(L)v=(L\gamma_{v})(e)$ where $\gamma_{v}\in C^{\infty}(G,E)$ stands for the orbit map of $v$ under $\pi$, namely, $\gamma_{v}(g)=\pi(g)v$.

In this article, we restrict our attention to representations of the following kind.

\begin{definition} \label{def: DM rep exp}A representation $\pi$ of $G$ on $E$ is said to be of \emph{exponential type} if for each $v\in E$ and $p\in\operatorname*{csn}(E)$, there is $\lambda >0$ such that
\[\sup_{g\in G}  e^{-\lambda |g|_{G}}p(\pi(g)v)<\infty.
\]
\end{definition}

Given a representation of exponential type $\pi$, we can always extend the action $\Pi$ to an algebra homomorphism $\Pi:\mathcal{K}(G)\to L(E^{\infty},E^{\infty})$ via \eqref{equ:action}. 

In the next lemma, we show that representations of exponential type in fact satisfy a certain uniform exponential growth condition.

\begin{lemma} \label{l: DM rep exp} Let $\pi$ be a representation of exponential type of $G$ on $E$. 
For each $p\in\operatorname*{csn}(E)$, one can find $C,\lambda>0$ and $q\in\operatorname*{csn}(E)$
such that
\begin{equation}
\label{eq: DM rep exp}
p(\pi(g)v)\leq C e^{\lambda |g|_{G}} q(v), \qquad \mbox{for all } v\in E,\: g\in G.
\end{equation}
\end{lemma}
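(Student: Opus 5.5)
Fix $p\in\operatorname*{csn}(E)$. The plan is a Baire category argument, followed by a local-to-global step that exploits the subadditivity \eqref{eq submult} and the fact that $G$ is connected. Such a step is needed because Definition \ref{def: DM rep exp} only gives a bound that depends on the vector $v$.

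\emph{Step 1 (Baire).} For $n\in\N$ put
\[
F_n=\Big\{v\in E:\ \sup_{g\in G} e^{-n|g|_G}p(\pi(g)v)\leq n\Big\}.
\]
For fixed $g$ the map $v\mapsto p(\pi(g)v)$ is continuous, since each $\pi(g)$ is a continuous operator. Hence each $F_n$ is closed, being an intersection of closed sets. The $F_n$ are also absolutely convex (symmetric and convex), because $p$ is a seminorm and $\pi(g)$ is linear. By the exponential type assumption $E=\bigcup_n F_n$. Since $E$ is Fréchet, the Baire category theorem gives some $F_{n}$ with nonempty interior. Because $F_n$ is absolutely convex, it then contains a neighbourhood of $0$: if $v_0+U\subset F_n$ then $U\subset \tfrac12(F_n-F_n)\subset F_n$. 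So there is $q\in\operatorname*{csn}(E)$ with $\{q\le 1\}\subset F_n$. By homogeneity,
\[
p(\pi(g)v)\le n e^{n|g|_G}q(v)\qquad\text{for all } v\in E,\ g\in G,
\]
which is \eqref{eq: DM rep exp} with $C=\lambda=n$.

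Strictly speaking the Baire step already yields the full statement, so no further work is needed. Still, I would double-check that Definition \ref{def: DM rep exp} asks for $\sup_{g}$ over all of $G$ with a vector-dependent $\lambda$, and it does. If instead one only had local equicontinuity, i.e. $p(\pi(g)v)\le q(v)$ for $g$ near $e$, I would add a second step. One writes any $g$ as a product of about $|g|_G/r$ elements from a ball of radius $r$, which is possible because $d$ is a left-invariant geodesic distance. Then one iterates the local estimate, noting that the seminorm changes at each step. That iteration would be the real obstacle, since the chain $p\le q_1\le q_2\le\cdots$ need not stabilize. This is exactly why the global Baire argument on the sets $F_n$ above is preferable: it produces a single $q$ for all $g$ at once.

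The only delicate point is the closedness of $F_n$. It follows from the continuity of each individual operator $\pi(g)$ on $E$, which is part of the meaning of "representation on $E$"; no joint continuity is required.
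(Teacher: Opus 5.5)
Your proof is correct and is essentially the paper's argument: a Baire category argument on the closed sets $F_n$, then moving an interior point to $0$ and rescaling via a continuous seminorm $q$. The only cosmetic difference is that you use absolute convexity of $F_n$ (with $U$ taken symmetric) to get $0\in F_n^{\circ}$ directly, whereas the paper uses $F_{n_0}-F_{n_0}\subseteq F_{2n_0}$; the local-to-global second step you mention is, as you note, unnecessary, so your opening sentence announcing it should simply be dropped.
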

\begin{proof}
Fix  $p\in\operatorname*{csn}(E)$. For $n \in \N$, we define
$$
F_n = \{ v \in E :\: p(\pi(g)v) \leq n e^{n|g|_G}, \, \mbox{for all }  g \in G\}.
$$
Then, each $F_n$ is closed and, as $\pi$ is of exponential type,
$$
E = \bigcup_{n \in \N} F_n.
$$
By the Baire category theorem, there is $n_0 \in \N$ such that $F_{n_0}^\circ \neq \emptyset$. Pick $v \in F_{n_0}$ and an open neighborhood $U$ of $0$ in $E$ such that $v+ U \subseteq F_{n_0}$. Then,
$$
U-U = (v+U) - (v+U) \subseteq F_{n_0} - F_{n_0} \subseteq F_{2n_0},
$$
which yields that $0 \in F_{2n_0}^\circ$. Hence, there are $q\in\operatorname*{csn}(E)$ and $\varepsilon >0$ such that 
$$
\{ v \in E :\: q(v) \leq \varepsilon \} \subseteq F_{2n_0}.
$$
This implies that 
$$
p(\pi(g)v)\leq \frac{2n_0}{\varepsilon} e^{2n_0 |g|_{G}} q(v), \qquad \mbox{for any } v\in E,\: g\in G.
$$

\end{proof}

\begin{remark} When $E$ is a Banach space, every representation of a Lie group is automatically of exponential type, as follows from G\aa rding's lemma \cite[Lemme 3]{G60} applied to the function $\log \|\pi(g)\|_{L(E,E)}$.
\end{remark}
\begin{remark}  Lemma \ref{l: DM rep exp} shows that a representation of exponential type on a Fr\'{e}chet space is always of $\exp(|\cdot|_{G})$-moderate growth in the sense of \cite{B-K}, a notion that essentially goes back to Casselman \cite{C}.
In the terminology from \cite{DPV21}, such representations are  projective  proto-Banach. Projective (and inductive) proto-Banach representations \cite{DPV21} generalize the notion of proto-Banach representations from \cite{Glo}.
\end{remark}

 \subsection{Weight functions}\label{Subsection: weight functions} We call an
  increasing function $\sigma \colon [0,\infty)\to [0,\infty)$ a \emph{weight function} if it satisfies the following two growth properties at $\infty$ (cf. \cite{BMT}):
\begin{itemize}
    \item[\((\alpha)\)] \(\sigma(2t) = O(\sigma(t))\).
    \item[\((\gamma)\)] \(\log t = o(\sigma(t))\).
\end{itemize}

 The importance of weight functions for us  mainly lies in the existence of entire functions satisfying certain lower and upper bounds with respect to a weight function. The following lemma is essentially\footnote{The statements of \cite[Lemma 6.1 and Proposition 6.2]{DPV21} are formulated for so-called associated functions of weight sequences, but inspection in their proofs shows they hold true for weight functions in the sense we consider here.} shown in \cite[Lemma 6.1 and Proposition 6.2]{DPV21}.

\begin{lemma}[{\cite[Lemma 6.1 and Proposition 6.2]{DPV21}}]
\label{l: DM we} Given a weight function $\sigma$, there are
an entire function $Q$ on $\C$ and a constant $a\in (0,1]$ such that, for each $n \in \N$,
\begin{equation}
\label{eq: DM we}
0<\inf_{|\Im m\: z|\leq n} e^{-a\sigma(\Re e\: z)}|Q(z)| \qquad \mbox{ and } \qquad \sup_{|\Im m\: z|\leq n}  e^{-\sigma(\Re e\: z)}|Q(z)|<\infty.
\end{equation}
\end{lemma}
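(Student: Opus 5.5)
The plan is to follow the construction of \cite{DPV21} for associated functions of weight sequences, and to check that it uses only properties $(\alpha)$ and $(\gamma)$ of $\sigma$. The first step is to regularize $\sigma$. Because only growth at infinity matters, we will replace $\sigma$ by a comparable weight $\omega$ that is concave or, failing that, subadditive up to constants, with $\omega(t)\asymp\sigma(t)$ for large $t$ and $\log t=o(\omega(t))$. Condition $(\alpha)$ gives $\sigma(2t)\le K\sigma(t)$, so $\sigma$ has at most polynomial growth. We can therefore pass to an equivalent weight of the form $\omega(t)=\sup_k \log(t^k/M_k)$, where the sequence $M_k:=\sup_{t}t^k e^{-\sigma(t)}$ is log-convex. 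Property $(\gamma)$ makes every $M_k$ finite. Property $(\alpha)$ gives $\omega\ge \sigma/C - C$; the trivial bound is $\omega\le\sigma$. With this we are exactly in the framework of associated functions, and the bounds \eqref{eq: DM we} are stable under such equivalences once $a$ is shrunk.

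The second step constructs $Q$ as a canonical product with zeros on the imaginary axis. Put
\[
Q(z)=\prod_{k\ge1}\Bigl(1+\frac{z^2}{m_k^2}\Bigr),
\]
where $m_k=M_k/M_{k-1}$ is increasing, or a suitably sparsified subsequence of it. Property $(\gamma)$ gives $\sum 1/m_k^2<\infty$, so the product converges. For real $z=x$, all factors are at least $1$, and the standard estimate
\[
\log|Q(x)|\asymp \sum_k \log\bigl(1+x^2/m_k^2\bigr)\asymp \omega(x)
\]
follows from counting $N(r)=\#\{m_k\le r\}$. Here $\log|Q(x)|=\int \log(1+x^2/r^2)\,dN(r)$, while $\omega(x)=\int_0^x N(r)/r\,dr$. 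The doubling condition $(\alpha)$ lets us compare these two expressions within constant factors, which produces the constant $a$ in the lower bound. If needed, we renormalize $\sigma$ by a fixed multiple so that the upper bound holds with constant $1$.

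The third step, which we expect to be the main obstacle, is extending both bounds to the strips $|\Im z|\le n$. The upper bound is easy: $|1+z^2/m_k^2|\le 1+(|x|^2+n^2)/m_k^2$, which changes $\log|Q|$ by at most $\omega(\sqrt{x^2+n^2})-\omega(x)=O(1)$ thanks to the slow growth of $\omega$. The lower bound is delicate because the zeros $\pm i m_k$ enter the strips, namely those with $m_k\le n$. We handle the finitely many small zeros by dividing them out or by shifting. Concretely, we replace $m_k$ by $m_k+n_0$, or simply discard the factors with $m_k\le n$ and multiply by a nonvanishing entire function. This does not work uniformly in $n$, so instead we build the product using only zeros $m_k\ge 1$ and place them at $\pm i m_k$ in a way that still leaves zeros in every strip. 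The fix, as in \cite{DPV21}, is to note that for $|z|\ge 2n$ the factors satisfy $|1+z^2/m_k^2|\ge c\,(1+x^2/m_k^2)$ except for $m_k$ comparable to $|z|$. On compact parts of the strip we instead use a translate or a small perturbation: we take $Q(z)$ multiplied by $e^{\epsilon z^2}$-free factors, or we choose zeros off a sector around the real axis, say at $m_k e^{i\pi/4}$ paired with their conjugates. The point is that the distance from the zeros to the strip $|\Im z|\le n$ grows linearly in $m_k$, so $\log|1-z/w_k|\ge \log|1-x/w_k|-O(n/m_k)$. Summing $n/m_k$ is only harmless if $\sum 1/m_k<\infty$, which again follows from $(\gamma)$ after sparsifying. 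Finally, we absorb the lower-order factor into $a$. This gives both inequalities in \eqref{eq: DM we} for each fixed $n$, with $a\in(0,1]$ independent of $n$.
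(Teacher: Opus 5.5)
Your proposal has a genuine gap, and it is structural. The first inequality in \eqref{eq: DM we} is required for \emph{every} $n\in\N$, and the strips $\{|\Im m\: z|\le n\}$ exhaust $\C$. So it forces $Q$ to have no zeros at all. This is exactly what the paper uses when it says the lower bound gives $1/Q\in\mathcal{U}(\C)$, since elements of $\mathcal{U}(\C)$ are entire. Every candidate in your Steps 2--3 is a canonical product with infinitely many zeros, whether at $\pm i m_k$ or at $m_k e^{i\pi/4}$ and its reflections. A zero $w$ lies in the strip $|\Im m\: z|\le n$ as soon as $n\ge |\Im m\: w|$, and there the infimum is $0$. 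Your estimate $\log|1-z/w_k|\ge \log|1-x/w_k|-O(n/m_k)$ only controls the factors with $m_k\gg n$. Shifting, sparsifying, or dividing out finitely many factors cannot remove the zeros of the remaining ones, and your remark that the product should ``still leave zeros in every strip'' concedes the point.

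There are further problems even on a single strip. First, $(\gamma)$ is a \emph{lower} growth condition and implies neither $\sum m_k^{-2}<\infty$ nor $\sum m_k^{-1}<\infty$. For the admissible weights $\sigma(t)=t$ and $\sigma(t)=t^3$ one gets $m_k\asymp k$ and $m_k\asymp k^{1/3}$, so $\sum m_k^{-1}=\infty$ in the first case and $\prod(1+z^2/m_k^2)$ diverges in the second. Second, in Step 1, $\omega(e^s)$ is bounded by the convex minorant of $s\mapsto\sigma(e^s)$. Under $(\alpha)$ and $(\gamma)$ alone this minorant need not satisfy $\omega\ge\sigma/C-C$; increasingly long flat stretches of $\sigma(e^s)$ break it. So the reduction to associated functions is not justified. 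The paper avoids this reduction: it invokes \cite{DPV21} and notes that the argument there works directly for weight functions.

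The missing idea is to look for $Q=e^{h}$ with $h$ entire and $\Re e\: h$ controlled on the strips. For example, pick $m\in\N$ with $4^m>K$, where $\sigma(2t)\le K\sigma(t)$ for large $t$, and put
\[
h(z)=\int_{[1,\infty)}\bigl(1-e^{-(z/r)^{2m}}\bigr)\,d\sigma(r).
\]
This is entire, since $\int_{[1,\infty)} r^{-2m}\,d\sigma(r)<\infty$ by $(\alpha)$.

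On $\R$, the bound $1-e^{-s}\asymp \min(1,s)$ for $s\ge 0$ together with $(\alpha)$ gives $c_1\sigma(|x|)-C\le h(x)\le c_2\sigma(|x|)+C$.

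On a strip $|\Im m\: z|\le n$ with $|x|$ large, one has $\Re e\, z^{2m}\ge x^{2m}/2$ and $|z^{2m}-x^{2m}|\le C_n|x|^{2m-1}$. Hence $|e^{-(z/r)^{2m}}-e^{-(x/r)^{2m}}|\le C_n'|x|^{-1}\min(1,(x/r)^{2m})$, and therefore $\Re e\: h(x+iy)=h(x)\bigl(1+O_n(|x|^{-1})\bigr)$.

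A suitable multiple $Q=e^{\kappa h}$, with $\kappa>0$ small, then satisfies \eqref{eq: DM we}. This needs no regularization of $\sigma$ and does not use $(\gamma)$, which enters only afterwards to get $1/Q\in\mathcal{U}(\C)$.
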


 We will also need the following minorization lemma.

\begin{lemma}
\label{l: DM w}
 Let $(\tau_n)_{n\in\mathbb{N}}$ be a sequence of non-negative functions on $[0,\infty)$ such that each $\tau_n$ satisfies $\log t= o(\tau_n(t))$. There is a weight function $\sigma$ such that $\sigma(t)=o(\tau_{n}(t))$, for each $n\in\N$.
\end{lemma}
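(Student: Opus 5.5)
We need to construct, from the sequence $(\tau_n)_{n\in\N}$, an increasing function $\sigma\colon[0,\infty)\to[0,\infty)$ with three properties: $(\alpha)$ $\sigma(2t)=O(\sigma(t))$, $(\gamma)$ $\log t=o(\sigma(t))$, and $\sigma=o(\tau_n)$ for every $n$. The plan is to build $\sigma$ of the form $\sigma(t)=\omega(t)\log(e+t)$, where $\omega$ is increasing, tends to $\infty$, and grows so slowly that $(\alpha)$ holds automatically. Once $\omega(t)\to\infty$, property $(\gamma)$ is immediate. Property $(\alpha)$ follows from the doubling of $\log(e+t)$ together with $\omega(2t)\le \omega(t)+1\le 2\omega(t)$ for large $t$, which we will arrange. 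So everything reduces to choosing $\omega$ correctly.

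\emph{Step 1 (reduction to a single minorant).} Set $\rho_n(t)=\tau_n(t)/\log(e+t)$. The hypothesis gives $\rho_n(t)\to\infty$ as $t\to\infty$ for each $n$ (the difference between $\log t$ and $\log(e+t)$ is harmless). Next pass to increasing minorants $\tilde\rho_n(t)=\inf_{s\ge t}\rho_n(s)$; these still tend to $\infty$, and $\tilde\rho_n\le\rho_n$. Finally diagonalize. Choose points $T_1<T_2<\cdots$ with $T_{k+1}\ge 2T_k$ such that $\tilde\rho_n(t)\ge k^2$ for all $t\ge T_k$ and all $n\le k$. Then define $\rho(t)=k$ for $t\in[T_k,T_{k+1})$, and $\rho(t)=0$ for $t<T_1$. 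This $\rho$ is increasing and tends to $\infty$. Moreover, for each fixed $n$ and every $k\ge n$, we have $\rho(t)=k\le \tilde\rho_n(t)/k$ on $[T_k,T_{k+1})$. Hence $\rho(t)=o(\rho_n(t))$ for every $n$.

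\emph{Step 2 (regularization).} The step function $\rho$ jumps by $1$ at each $T_k$, and $T_{k+1}\ge 2T_k$. It follows that for $t$ large, the interval $[t,2t]$ contains at most one jump point, so $\rho(2t)\le\rho(t)+1$. We then take $\omega=\rho$, or $\omega=\max(\rho,1)$ to avoid zero values. This gives $(\alpha)$ for $\sigma=\omega\log(e+t)$, since $\sigma(2t)\le(\omega(t)+1)\cdot C\log(e+t)\le 2C\sigma(t)$ for large $t$. The function $\sigma$ is increasing as a product of increasing non-negative functions. It satisfies $\sigma(t)/\tau_n(t)=\rho(t)/\rho_n(t)\to0$, and $(\gamma)$ holds because $\omega\to\infty$.

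The only delicate point is the diagonal choice of the $T_k$. It must simultaneously give $o(\tau_n)$ for all $n$ (this is why we use the factor $k$ rather than merely $\tilde\rho_n\ge k$), and it must keep jumps sparse enough for $(\alpha)$ (this is why we impose $T_{k+1}\ge 2T_k$). Both requirements are compatible because each condition only asks $T_{k+1}$ to be large enough. If one wants $\sigma$ continuous, one can linearly interpolate $\rho$ on $[T_k,2T_k]$ without affecting any of the estimates.
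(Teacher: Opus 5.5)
Your proof is correct and follows essentially the same argument as the paper. Both choose points diagonally with $T_{k+1}\ge 2T_k$ and a $k^2$ margin (the paper requires $\log t\le n^{-2}\min_{k\le n}\tau_k(t)$ for $t\ge t_n$), so that $\sigma\approx k\log t$ on the $k$-th block. The only difference is how the blocks are glued: the paper uses the continuous Braun--Meise--Taylor function $\sigma(t)=n\log t-\sum_{k\le n}\log t_k$ on $[t_n,t_{n+1})$, which needs the extra condition $\log t_{n+1}\ge\max_{k\le n}2^{n+1-k}\log t_k$ and refers to \cite[Lemma 1.6]{BMT} for $(\alpha)$ and $(\gamma)$, whereas your step multiplier $\max(\rho,1)\log(e+t)$ needs only the doubling condition and lets you check $(\alpha)$ and $(\gamma)$ directly, which is legitimate because the paper's definition of a weight function does not require continuity.
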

\begin{proof}
We closely follow the proof of \cite[Lemma 1.6]{BMT}. Determine inductively a sequence $(t_n)_{n \in \N}$ of positive numbers with $t_1 =e$ such that, for all $n \in \N$,
\begin{align}
\label{prop-seq}
&t_{n+1} \geq 2t_n, \qquad \log t_{n+1} \geq \max_{1 \leq k \leq n} 2^{n+1-k} \log t_k, \\
& \nonumber
 \mbox{and} \quad \log t \leq \frac{1}{n^2} \min_{1 \leq k \leq n}  \tau_k(t), \quad \mbox{for } t \geq t_n.
\end{align}
Define $\sigma \colon [0,\infty)\to [0,\infty)$ by $\sigma(t) = 0$ for $t \in [0,t_1)$ and
$$
\sigma(t) = n \log t - \sum_{k=1}^n \log t_k, \qquad \mbox{ for } t \in [t_n, t_{n+1}).
$$
Arguing exactly as in the proof of \cite[Lemma 1.6]{BMT} (with $\omega(t) = \log t$), we see that the first two properties from \eqref{prop-seq} imply that $\sigma$ is a weight function. 
We now show that $\sigma(t)=o(\tau_{n}(t))$, for each $n\in\N$.  Using the third property from \eqref{prop-seq},  we infer that
$$
\sigma(t) \leq n \log t \leq \frac{\tau_n(t)}{n}, \qquad \mbox{for } t \geq t_n.
$$
\end{proof}

\section{Factorization on the real line}\label{sec: factorization real line}
A key milestone toward our proof of Theorem \ref{th: fth} is to first establish it for the particular case $G=\R$. 

\begin{proposition}\label{prop: DM R} Let $\pi$ be a representation of exponential type of $\R$ on $E$. Then, for each bounded subset $B\subset E^{\infty}$, there are $\chi \in \mathcal{K}(\R)$ and another bounded subset $B'\subset E^{\infty}$ such that $B=\Pi(\chi)(B')$.
\end{proposition}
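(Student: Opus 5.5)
We follow the classical Dixmier--Malliavin/Fourier-multiplier strategy on $\R$, adapted to exponential growth through Lemmas \ref{l: DM we} and \ref{l: DM w}. Write $D=\pi(d/dt)$ for the infinitesimal generator acting on $E^{\infty}$. The idea is to find an entire function $Q$ with controlled growth on horizontal strips, set $B'=\{Q(-iD)v: v\in B\}$ (suitably interpreted as an infinite-order differential operator $\sum_k c_k D^k$), and take $\chi$ to be the inverse Fourier transform of $1/Q$. Since $1/Q$ extends holomorphically to every strip $|\Im z|\le n$ with decay, a Paley--Wiener-type shift of contour shows that $\chi$ decays like $e^{-n|t|}$ for every $n$, together with all its derivatives. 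Hence $\chi\in\mathcal{K}(\R)$, and formally $\Pi(\chi)Q(-iD)=\mathrm{id}$.

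\textbf{Step 1: choosing the weight.} Fix a fundamental system of seminorms $p_j$ on $E$ and use Lemma \ref{l: DM rep exp} to get constants with $p_j(\pi(t)v)\le C_je^{\lambda_j|t|}q_j(v)$. Since $B$ is bounded in $E^{\infty}$, the numbers $M_{j,k}=\sup_{v\in B}q_j(D^kv)$ are finite for all $j,k$. From these we build functions $\tau_n(t)=\inf_k\log\big(t^k/M_{n,k}\big)$ (associated-function type, after a diagonal arrangement in $n$ and in the derivative orders). Each $\tau_n$ satisfies $\log t=o(\tau_n(t))$ because we may let $k\to\infty$. Lemma \ref{l: DM w} then gives a weight function $\sigma$ with $\sigma=o(\tau_n)$ for all $n$, and Lemma \ref{l: DM we} gives the entire function $Q$ with $e^{a\sigma(\Re z)}\lesssim|Q(z)|\lesssim e^{\sigma(\Re z)}$ on every strip.

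\textbf{Step 2: making sense of $Q(-iD)$ and $B'$.} Expand $Q(z)=\sum c_kz^k$. The upper bound on strips, via Cauchy estimates, gives $|c_k|\le \inf_r e^{\sigma(r)}r^{-k}$, up to constants. Because $\sigma=o(\tau_n)$, we get $\sum_k|c_k|M_{n,k+m}<\infty$ for every $n$ and $m$. So $\sum c_k(-iD)^kv$ converges in $E^{\infty}$ uniformly for $v\in B$, and $B'=Q(-iD)B$ is bounded in $E^{\infty}$. Making the $\tau_n$ account for all derivative shifts $m$ is the bookkeeping point where the definition must be arranged carefully.

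\textbf{Step 3: the identity $\Pi(\chi)Q(-iD)v=v$.} Define $\chi(t)=\frac{1}{2\pi}\int_{\R}e^{it\xi}Q(\xi)^{-1}\,d\xi$. The lower bound $|Q|\gtrsim e^{a\sigma}$ on strips, with $\sigma$ growing faster than $\log$, gives superpolynomial decay of $1/Q$ and of $\xi^m/Q$ on each strip. Shifting the contour to $\Im\xi=\pm n$ then yields $|\chi^{(m)}(t)|\lesssim e^{-n|t|}$ for all $n,m$, so $\chi\in\mathcal{K}(\R)$. For the identity, note that $\Pi(\chi)D^kv=\Pi((-1)^k\chi^{(k)})v$, because the boundary terms vanish thanks to exponential decay against exponential growth. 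Hence
\[
\Pi(\chi)Q(-iD)v=\Pi\Big(\sum_k c_k(i\,d/dt)^k\chi\Big)v .
\]
The series $\sum_k c_k(i\,d/dt)^k\chi$ must be shown to converge to the Dirac delta in a sense strong enough to pair with the orbit $t\mapsto\pi(t)v$, which has exponential growth $e^{\lambda|t|}$. On the Fourier side this series is $Q(\xi)\cdot Q(\xi)^{-1}=1$.

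\textbf{Main obstacle.} We expect Step 3 to be the main difficulty, namely justifying the interchange of the infinite sum with the integral. Our first approach is to avoid the series: use the entire-function bounds directly and check the identity on the dense space of vectors $\Pi(\varphi)w$ with $\varphi\in\mathcal{D}(\R)$. There both sides can be computed through Fourier transforms with contour shifts in the strip $|\Im z|\le\lambda+1$, and we then extend by continuity. Failing that, we would truncate $Q$ to its Taylor polynomials $Q_N$ and control $\Pi(\chi)(Q-Q_N)(-iD)v\to0$ using the estimates from Step 2.
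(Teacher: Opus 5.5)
Your proposal breaks down at Step 2, and the problem is structural: no entire function $Q$ can do what Steps 2 and 3 require at the same time.

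First, Lemma \ref{l: DM we} gives bounds only on horizontal strips. Strips do not contain the circles $|z|=r$ for large $r$, so these bounds give no Cauchy estimates for the Taylor coefficients $c_k$.

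More fundamentally, $Q$ must be zero-free. Since $\chi\in\mathcal{K}(\mathbb{R})$, its Fourier transform is entire, and $\widehat{\chi}=1/Q$ on $\mathbb{R}$ forces $\widehat{\chi}\,Q\equiv 1$. The lower bound in \eqref{eq: DM we} on every strip says the same thing.

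Now take the translation representation on $L^{2}(\mathbb{R})$ and $B=\{v\}$ with $|\widehat{v}(\xi)|=e^{-(\log(1+|\xi|))^{2}}$. Then $\|D^{k}v\|$ grows at least like $e^{k^{2}/4}$. Your requirement $\sum_k |c_k|\, M_{n,k}<\infty$ then forces $|c_k|\lesssim e^{-k^{2}/4}$, so $Q$ has order zero. By Hadamard's theorem, a zero-free entire function of order zero is constant. More generally, whenever $M_{n,k}/(R^k k!)\to\infty$ for every $R>0$, you get $(k!|c_k|)^{1/k}\to 0$, so $Q$ has exponential type $0$ and again must be constant.

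Consequently, $Q(-iD)v=\sum_k c_k(-iD)^k v$ cannot be defined by a Taylor series for general bounded $B\subset E^{\infty}$, and $B'=Q(-iD)B$ does not exist as proposed. Step 3 then has nothing to stand on. Even granting Step 2, your fallback (b) only gives $\Pi(\chi)Q(-iD)v=\lim_N \Pi(\chi)Q_N(-iD)v$. Identifying this limit with $v$ is exactly the content of the formal identity $Q(i\,d/dt)\chi=\delta$ paired against an exponentially growing orbit, so the fallback does not resolve the difficulty. (Minor: your $\tau_n$ should be a supremum over $k$, not an infimum.)

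The missing idea is to replace the power series $Q(-iD)$ by multiplication by $Q$ on the Fourier side of the orbit map, which uses only the strip bounds of $Q$. The paper proceeds as follows:
\begin{itemize}
\item It represents $\widehat{\gamma_v}$, viewed as an $E$-valued Silva tempered ultrahyperfunction, by its Fourier--Laplace transforms $F^v_n$ on $|\mathrm{Im}\, z|\geq\lambda_n$.
\item It chooses $\sigma$ from the strip-decay constants $C_{n,k}$ of the $F^v_n$, not from Taylor data.
\item It defines $h^v=\mathcal{F}^{-1}(Q\cdot\widehat{\gamma_v})$ through the analytic representation $(QF^v_n)_n$. This gives a bounded family in $\mathcal{O}^{\exp}_C(\mathbb{R},E)$.
\item The identity you were after becomes $\gamma_v=\chi*h^v$. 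It follows from $\widehat{\chi}\cdot(Q\widehat{\gamma_v})=\widehat{\gamma_v}$, computed in $\mathcal{U}'_{(\sigma)}(\mathbb{C},E)$ and transferred to $\mathcal{U}'(\mathbb{C},E)$ via the density of $\mathcal{U}_{(\sigma)}(\mathbb{C})$ in $\mathcal{U}(\mathbb{C})$ (Lemma \ref{l: dense}).
\item Translation covariance of this multiplication gives $h^v=\gamma_{h^v(0)}$. Hence $B'=\{h^v(0): v\in B\}$ and $B=\Pi(\check{\chi})(B')$.
\end{itemize}
The vector $h^v(0)$ is what your $Q(-iD)v$ was meant to be, but it is never the sum of a convergent series in $D$. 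Your use of Lemmas \ref{l: DM w} and \ref{l: DM we} to pick $\sigma$ and $Q$, and your contour-shift argument that $\widehat{\chi}=1/Q$ yields $\chi\in\mathcal{K}(\mathbb{R})$, are in line with the paper.
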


The remainder of this section is entirely devoted to proving Proposition \ref{prop: DM R}. Throughout this section we assume $\pi$ is as in the statement of Proposition \ref{prop: DM R} with $E^{\infty}$ its associated space of smooth vectors. We need to explain some concepts in preparation for the proof. Our considerations make crucial use of  Silva's theory \cite{Silva58} of tempered ultrahyperfunctions (also known as Silva tempered ultradistributions). We refer to \cite{DV16,HPbook} for accounts on properties, standard operations, and Fourier-Laplace analysis with ultrahyperfunctions (and distributions).

The Silva test function space $\mathcal{U}(\mathbb{C})$ consists of all those entire functions such that
\begin{equation}
\label{eq: def Silva}
\sup_{|\Im m\: z|\leq k} |\psi(z)|(1+|\Re e\:z|)^{k} < \infty,
\end{equation}
for each $k\in\mathbb{N}$. The family of norms \eqref{eq: def Silva} defines the canonical Fr\'{e}chet space structure of $\mathcal{U}(\mathbb{C})$. The space of Silva  tempered ultrahyperfunctions is defined as the topological dual $\mathcal{U}'(\mathbb{C})$ of $\mathcal{U}(\mathbb{C})$.

Given a weight function $\sigma$, we also consider the Fr\'{e}chet space
\[
\mathcal{U}_{(\sigma)}(\mathbb{C})=\Big\{\psi \in\mathcal{U}(\mathbb{C}): (\forall k\in \mathbb{N})\sup_{|\Im m\: z|\leq k} |\psi(z)|e^{k\sigma(|\Re e\: z|)} < \infty\Big\}.
\]
It is a dense subspace of the Silva space:
\begin{lemma}\label{l: dense} The subspace
$\mathcal{U}_{(\sigma)}(\mathbb{C})$ is dense in $\mathcal{U}(\mathbb{C})$.
\end{lemma}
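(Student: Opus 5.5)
We will prove the density by multiplying a given $\psi\in\mathcal{U}(\mathbb{C})$ by entire functions that decay like $e^{-k\sigma}$ on horizontal strips and tend to $1$ locally uniformly. The key tool is Lemma \ref{l: DM we}. Choose an entire $Q$ and $a\in(0,1]$ as there, but for the weight function $2\sigma/a$ rather than $\sigma$. Note that $2\sigma/a$ is still increasing and satisfies $(\alpha)$ and $(\gamma)$. We want a function that is small away from $0$ and nonvanishing, so we take
\[
R(z)=\frac{1}{Q(z)Q(-z)}.
\]
This is entire provided $Q$ has no zeros. The lower bound in \eqref{eq: DM we} gives $Q\neq 0$ on every strip $|\Im m\, z|\le n$, hence on all of $\mathbb{C}$. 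Thus $R$ is entire, and on each strip $|\Im m\, z|\le n$
\[
|R(z)|\le C_n e^{-2\sigma(\Re e\, z)-2\sigma(-\Re e\, z)}\le C_n e^{-2\sigma(|\Re e\, z|)}.
\]
Here we use that $\sigma\ge 0$, so the term with the correct sign of $\Re e\, z$ alone gives the bound. (If $\sigma$ is only defined on $[0,\infty)$, we interpret $Q$ via $\sigma(|\cdot|)$; this is harmless.)

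Next we define the approximants by rescaling: for $0<\varepsilon\le 1$, set
\[
\psi_\varepsilon(z)=\psi(z)\,\frac{R(\varepsilon z)}{R(0)}.
\]
The first step is to check that $\psi_\varepsilon\in\mathcal{U}_{(\sigma)}(\mathbb{C})$. On the strip $|\Im m\, z|\le k$ we have $|\Im m\,\varepsilon z|\le k$, so
\[
|R(\varepsilon z)|\le C_k e^{-2\sigma(\varepsilon|\Re e\, z|)}.
\]
By $(\alpha)$, iterated $\log_2(1/\varepsilon)$ times, $\sigma(t)\le A_\varepsilon\,\sigma(\varepsilon t)+B_\varepsilon$. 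Hence $e^{k\sigma(|\Re e\, z|)}|R(\varepsilon z)|$ is bounded by a constant times a power of $e^{\sigma(\varepsilon|\Re e\, z|)}$, and this is not obviously bounded. To fix this we do not use a single $R$ but the product
\[
R_\varepsilon(z)=\frac{R(\varepsilon z)^{m}}{R(0)^m}
\]
with $m$ large. This does not suffice either, since $k$ is arbitrary. So instead we choose $Q$ for the weight $N\sigma$ with $N$ large, and control $k\sigma$ by $N\sigma(\varepsilon\,\cdot)$ using $(\alpha)$. But the constant $A_\varepsilon$ depends on $\varepsilon$, while we need every $k$.

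The clean fix is to use the product over all $k$ and exploit that $\psi$ itself can absorb polynomial growth. More precisely, $\sigma(\varepsilon t)\ge \sigma(t)/A_\varepsilon - B_\varepsilon$ with $A_\varepsilon\sim \varepsilon^{-c}$, a polynomial in $1/\varepsilon$. Therefore $R(\varepsilon z)^{m}$ with $m>kA_\varepsilon/2$ works for a fixed $k$. Since the definition of $\mathcal{U}_{(\sigma)}$ requires all $k$, the plan is this: for the approximation it suffices to prove density of $\bigcap_k\mathcal{U}_{(k\sigma)}$ through a diagonal argument. Alternatively, and more simply, replace $\sigma$ by a weight $\tilde\sigma$ with $\sigma=o(\tilde\sigma)$, for instance $\tilde\sigma=\sigma\cdot\log(e+\sigma)$ suitably regularized so that it still satisfies $(\alpha)$. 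Then a single factor $R$ built from $\tilde\sigma$ gives, for every $k$,
\[
k\sigma(t)-2\tilde\sigma(\varepsilon t)\le C_{k,\varepsilon},
\]
because $\tilde\sigma(\varepsilon t)\ge \tilde\sigma(t)/A_\varepsilon-B_\varepsilon$ and $\sigma=o(\tilde\sigma)$. Verifying that such a $\tilde\sigma$ satisfies $(\alpha)$ is the main technical obstacle. My first attempt would be $\tilde\sigma(t)=\sigma(t)\log(e+\sigma(t))$; its doubling follows from that of $\sigma$ together with $\log(e+Ct)\le \log(e+t)+\log C$.

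Finally, we show $\psi_\varepsilon\to\psi$ in $\mathcal{U}(\mathbb{C})$ as $\varepsilon\to0$. Fix $k$ and $\eta>0$. The family $R(\varepsilon z)/R(0)$ is bounded on the strip $|\Im m\, z|\le k+1$ uniformly in $\varepsilon\le1$, since $|R|\le C_{k+1}$ there. Choose $T$ such that
\[
\sup_{|\Im m\, z|\le k,\,|\Re e\, z|\ge T}|\psi(z)|(1+|\Re e\, z|)^{k}<\eta,
\]
using the $(k+1)$-seminorm of $\psi$. On the compact rectangle $|\Re e\, z|\le T$, $|\Im m\, z|\le k$, we have $R(\varepsilon z)/R(0)\to1$ uniformly by continuity of $R$. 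Combining the two regions gives
\[
\limsup_{\varepsilon\to0}\sup_{|\Im m\, z|\le k}|\psi_\varepsilon(z)-\psi(z)|(1+|\Re e\, z|)^k\le (1+C_{k+1}/|R(0)|)\,\eta,
\]
which proves the density.
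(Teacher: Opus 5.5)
Your final argument, once the abandoned $R^m$ and $N\sigma$ attempts are dropped, is correct and is essentially the paper's proof: the paper likewise multiplies $\psi$ by dilates $\phi(z/n)$ of $\phi=Q(0)/Q$, where $Q$ comes from Lemma \ref{l: DM we} applied to a weight $\omega$ with $\sigma=o(\omega)$, and it uses $(\alpha)$ to absorb the dilation. The paper simply takes $\omega=\sigma^2$; your $\sigma\log(e+\sigma)$ also works and needs no regularization. Applying the lemma to ``$2\sigma/a$'' is circular, since $a$ depends on the weight, but this is harmless: any fixed positive constant in the exponent suffices once $\sigma=o(\tilde\sigma)$.
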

\begin{proof}

Notice $\omega = \sigma^2$ is a weight function satisfying $\sigma = o(\omega)$. Let $Q$ be as in Lemma \ref{l: DM we} with $\omega$ as weight function. Then, $\phi = Q(0)/Q \in  \mathcal{U}_{(\sigma)}(\mathbb{C})$ and $\phi(0) = 1$. Set $\phi_n(z)=\phi(z/n)$ for $n \in \N$. Condition $(\alpha)$ (see Subsection \ref{Subsection: weight functions}) implies that $\phi_n  \in \mathcal{U}_{(\sigma)}(\mathbb{C})$ for all $n \in \N$. One readily verifies that the sequence $\psi_n= \phi_n \cdot\psi \in \mathcal{U}_{(\sigma)}(\mathbb{C})$ converges to $\psi$ in $\mathcal{U}(\mathbb{C})$.
\end{proof}
  The space  $\mathcal{K}(\R)$ of exponentially rapidly decreasing smooth functions on the real line  and the Silva space $\mathcal{U}(\mathbb{C})$ correspond to each other isomorphically under the Fourier transform. We fix here the constants in the Fourier transform as
$$
\mathcal{F}(\varphi)(\xi) = \widehat{\varphi}(\xi) = \int_{-\infty}^{\infty} \varphi(t) e^{ i \xi t} dt.
$$

The topological isomorphism of Fr\'{e}chet spaces $\mathcal{F}:\mathcal{K}(\mathbb{R})\to\mathcal{U}(\mathbb{R})$ extends to their dual spaces via the standard transposition procedure from generalized function theory. We will actually also work with $E$-valued versions of such spaces and their Fourier transforms. We define spaces of $E$-valued generalized functions as the spaces of continuous linear maps from the test function spaces into $E$, that is, we set $\mathcal{U}'(\mathbb{C},E)= L(\mathcal{U}(\mathbb{C}), E)$ and $\mathcal{K}'(\mathbb{R},E)=L(\mathcal{K}(\mathbb{R}), E)$, the spaces of $E$-valued Silva tempered ultrahyperfunctions and $E$-valued distributions of exponential type, respectively. We also work with $\mathcal{U}'_{(\sigma)}(\mathbb{C},E)=L(\mathcal{U}_{(\sigma)}(\mathbb{C}), E)$. In view of Lemma \ref{l: dense}, we might naturally regard $\mathcal{U}'(\mathbb{C},E)$ as a subspace of the larger $E$-valued ultrahyperfunction space $\mathcal{U}'_{(\sigma)}(\mathbb{C},E).$  We denote the action of an $E$-valued ultrahyperfunction or distribution $\mu$ at a corresponding test function $\psi$ by $\langle \mu,\psi \rangle \in E$. The Fourier transform of $\mu\in \mathcal{U}'(\mathbb{C}, E)$ is the $E$-valued distribution $\widehat{\mu}\in \mathcal{K}'(\mathbb{R},E)$ determined by $\langle \widehat{\mu},\varphi \rangle= \langle \mu,\widehat{\varphi }\rangle\in E$. Likewise, one defines $\mathcal{F}:\mathcal{K}'(\mathbb{R},E)\to\mathcal{U}'(\mathbb{R},E)$.

We now turn our attention to the representation $\pi$. Given a fixed $v\in E^{\infty}$, we recall that $\gamma_{v}\in C^{\infty}(\R, E)$ stands for its orbit map $\gamma_{v}(t)=\pi(t)v$. Observe that, by Lemma \ref{eq: DM rep exp}, $\gamma_{v}\in \mathcal{O}^{\exp}_{C}(\R, E)$, where the latter is the space of $E$-valued `very slowly exponentially increasing smooth functions', namely,
\begin{align*}\mathcal{O}^{\exp}_{C}(\R, E)=\{ f\in C^{\infty}(\R, E):(\forall p\in\operatorname*{csn}(E))(\exists & \lambda>0) (\forall  k\in \mathbb{N}) (p(f^{(k)} (t))=O(e^{\lambda|t|}))\}.
\end{align*}
The scalar-valued case of this and closely related spaces have been thoroughly studied in \cite{DV21}. We define a bornology on  $\mathcal{O}^{\exp}_{C}(\R, E)$: A subset $\mathcal{B}\subset \mathcal{O}^{\exp}_{C}(\R, E)$ is bounded if 
\[
(\forall p\in\operatorname*{csn}(E)) (\exists \lambda>0) (\forall  k\in \mathbb{N}) \sup_{\:f\in\mathcal{B},t\in\R}e^{-\lambda|t|}p(f^{(k)} (t))<\infty.
\]

Lemma \ref{l: DM rep exp} immediately yields the following result.

\begin{lemma}\label{l: DM bounded sets}
A subset $B\subset E^{\infty}$ is bounded if and only if $\{\gamma_{v}:\: v\in B\}\subset \mathcal{O}^{\exp}_{C}(\R, E)$ is bounded.
\end{lemma}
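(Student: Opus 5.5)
We prove the two implications separately. Both rest on the identity $\gamma_v^{(k)}(t)=\pi(t)\pi(X)^k v$, where $X=d/dt$ is the generator of $\mathfrak{D}(\R)$. We also use that the topology of $E^{\infty}$ is generated by the seminorms $p_{X^k}(v)=p(\pi(X)^k v)$, with $p\in\operatorname*{csn}(E)$ and $k\in\N$. This holds because $\mathfrak{D}(\R)$ is spanned by the powers $X^k$, and $p_L$ for a general $L$ is dominated by a finite sum of such seminorms.

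For the direction ``$\Leftarrow$'', suppose $\{\gamma_v: v\in B\}$ is bounded in $\mathcal{O}^{\exp}_{C}(\R,E)$. Evaluating the defining estimate at $t=0$ gives
\[
\sup_{v\in B} p(\pi(X)^k v)=\sup_{v\in B} p(\gamma_v^{(k)}(0))<\infty
\]
for every $p\in\operatorname*{csn}(E)$ and $k\in\N$. So every seminorm $p_{X^k}$ is bounded on $B$, and $B$ is bounded in $E^{\infty}$.

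For the direction ``$\Rightarrow$'', let $B\subset E^{\infty}$ be bounded and fix $p\in\operatorname*{csn}(E)$. Lemma \ref{l: DM rep exp} supplies $C,\lambda>0$ and $q\in\operatorname*{csn}(E)$ such that $p(\pi(t)w)\le Ce^{\lambda|t|_{\R}}q(w)$ for all $w\in E$ and $t\in\R$. The key point is that these constants depend only on $p$, and not on $k$ or on $v$. Applying the estimate with $w=\pi(X)^k v$ gives
\[
e^{-\lambda|t|}\,p(\gamma_v^{(k)}(t))\le C\,q(\pi(X)^k v)=C\,q_{X^k}(v).
\]
Taking the supremum over $v\in B$ and $t\in\R$, the right-hand side is finite because $B$ is bounded in $E^{\infty}$. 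This holds for every $k\in\N$ with the same $\lambda$, which is exactly the required bound, with the quantifiers in the order demanded by the bornology. Here $|t|_{\R}$ is a constant multiple of $|t|$, and that constant can be absorbed into $\lambda$.

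The only point needing care is this uniformity of $\lambda$ in $k$. It is exactly what Lemma \ref{l: DM rep exp} provides, since the Baire argument there yields one exponent for the whole space $E$. No step beyond this is expected to be an obstacle.
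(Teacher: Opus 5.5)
Your proof is correct and follows the paper's approach: the paper only remarks that the lemma follows immediately from Lemma \ref{l: DM rep exp}, and you supply the details. You use the identity $\gamma_v^{(k)}(t)=\pi(t)\pi(X)^k v$, evaluate at $t=0$ for one direction, and use the $k$-independent exponent from that lemma for the other.
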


Our strategy to prove Proposition \ref{prop: DM R} is first to factor each representation orbit $\gamma_{v}= \chi \ast h $  with certain special elements $\chi\in\mathcal{K}(\R)$ and $h\in\mathcal{O}^{\exp}_{C}(\R, E)$, and then use this to factor $v\in E^{\infty}$. We will achieve it by working with the Fourier transforms: $\widehat{\gamma}_{v}= \widehat{\chi} \cdot \widehat{h}$. Notice $\mathcal{O}^{\exp}_{C}(\R, E)\subset \mathcal{K}'(\R, E)$, where, as usual, $f\in\mathcal{O}^{\exp}_{C}(\R, E)$ is regarded as an $E$-valued distribution of exponential type via $\langle f,\varphi \rangle= \int_{\mathbb{R}} f(t)\varphi(t)dt\in E.$ So, $\widehat{\gamma}_{v}$ makes sense as an $E$-valued tempered ultrahyperfunction. 

It is desirable to represent the elements of $\mathcal{U}'(\C, E)$ in a traceable way. First notice \cite[Remark~24.5(a), p.~279]{mv} that $E$ is isomorphic to the projective limit of a projective sequence of Banach spaces

$$ \cdots \overset{\rho^{n+1}_{n}} \longrightarrow E_n \overset{\rho^{n}_{n-1}}\longrightarrow \cdots \overset{\rho^{3}_{2}} \longrightarrow E_2\overset{\rho^{2}_{1}}\longrightarrow E_1,
$$
with continuous linear spectral maps $\rho_n^{n+1}: E_{n+1} \to E_{n}$.
We may thus simply assume that $E$ is explicitly given by
\[
E=\varprojlim_{n\in\mathbb{N}} E_n= \Big\{(v_n)_{n\in\mathbb{N}} \in \prod_{n \in \N} E_n  : \, (\forall n \in \N)( v_n = \rho_n^{n+1}(v_{n+1})) \Big\},
\]
and that $E$ is endowed with the projective limit topology, that is, the coarsest topology that makes all natural maps  $\rho_{n}: E\to E_n,  (v_k)_{k \in \N} \mapsto v_n$, continuous.  Any $\mu\in\mathcal{U}'(\mathbb{C},E)$ (and similarly for other $E$-valued spaces) can be described as $\mu=(\mu_{n})_{n\in\mathbb{N}}\in \prod_{n\in\mathbb{N}}\mathcal{U}'(\mathbb{C},E_n)$, where all $E_n$-valued tempered ultrahyperfunction components  are linked via $\mu_n=\rho^{n+1}_{n}\circ\mu_{n+1}$ (in fact, they are given by $\mu_n=\rho_n \circ \mu$).

In order to move forward, we adapt\footnote{The generalization of Silva's theory of analytic representations to Banach space-valued tempered ultrahyperfunctions is straightforward, we therefore take the Banach space-valued case for granted.} a classical idea of Silva to the $E$-valued case.  Each $\mu_n\in \mathcal{U}'(\mathbb{C},E_n)$ can \cite{DV16,HPbook}  be represented by an $E_n$-valued function $F_n$ analytic on $\mathbb{C}\setminus(\mathbb{R}+i[-\lambda_n,\lambda_n])$ and continuous on $\mathbb{C}\setminus(\mathbb{R}+i(-\lambda_n,\lambda_n))$  for some $\lambda_n>0$ and being of at most polynomial growth on each strip within such a region, namely, for each $b>\lambda_n$, there is $N=N_{n,b}$ such that 
 \[
 \sup_{\lambda_n\leq |\Im m \: z|\leq b} (1+|\Re e\: z|)^{-N}\|F_{n}(z)\|_{E_n}<\infty.
 \]
The $E_n$-valued analytic function $F_n$ represents $\mu_n$ in the sense that 
\begin{equation}
\label{l: eq an rep}
\langle \mu_n, \varphi \rangle=- \oint_{\Gamma_{b_n}}F_{n}(z)\varphi(z)dz, \qquad \varphi\in \mathcal{U}(\mathbb{C}),
\end{equation}
where, in view of Cauchy's theorem, $b_n\in [\lambda_n,\infty)$ can be arbitrarily chosen and $\Gamma_{b_n}$ stands for the counter clockwise oriented boundary of the strip $\mathbb{R}+i[-b_n,b_n]$. 
Hence, $(F_n)_{n\in\mathbb{N}}$ represents $\mu$ as
\[
\langle \mu, \varphi \rangle=\left( - \oint_{\Gamma_{b_n}}F_n(z)\varphi(z)dz\right)_{n\in\mathbb{N}}, \qquad \varphi\in \mathcal{U}(\mathbb{C}), \qquad b_n>\lambda_n.
\]

Conversely, any family of analytic functions $(F_n)_{n\in\mathbb{N}}$ having the properties we just discussed defines an $E$-valued tempered ultrahyperfunction whenever the $\mu_n$ given by \eqref{l: eq an rep} satisfy the projective constrains $\mu_{n}=\rho^{n+1}_{n}\circ\mu_{n+1}$.

We have completed all preparatory work for our proof of the bounded strong factorization theorem for $G=\R$.

\begin{proof}[Proof of Proposition \ref{prop: DM R}] 
Let $B\subset E^{\infty}$ be bounded. 
Let $\varphi_{\pm}\in C^{\infty}(\mathbb{R})$ be such that $\varphi_{-}+\varphi_{+}=1$, $\operatorname*{supp}\varphi_{-}\subseteq(-\infty,0]$,  and $\operatorname*{supp}\varphi_{+}\subseteq[-1,\infty)$. For each $v\in B$, we write its orbit map as $\gamma_{v}=(f^{v}_{n})_{n\in\mathbb{N}}$. Denote $\mathcal{B}=\{\gamma_{v}:\: v\in B\}$, a bounded subset of $\mathcal{O}_{C}^{\exp}(\mathbb{R}, E)$ in view of Lemma \ref{l: DM bounded sets}.

We describe $\mathcal{F}(\mathcal{B})$ with the aid of analytic representations. Write $e_{c}(t)=e^{c t}$. Given $n\in\mathbb{N}$, there is $\lambda_n>0$ such that, for any $b>\lambda_n$,
$$\mathcal{B}^{b}_{\pm,n}=\{e_{\mp c}\varphi_{\pm}f^{v}_{n}:\: v\in B, c\in[\lambda_n,b] \}
$$ are bounded subsets of
$$\mathcal{S}(\mathbb{R},E_n)=\Big\{\phi\in C^{\infty}(\mathbb{R}, E_n): \:(\forall k\in\mathbb{N})\sup_{t\in\mathbb{R},\: 0\leq j\leq k}(1+|t|)^{k} \|\phi^{(j)}(t)\|_{E_n} < \infty
\Big\},$$
the well-known $E_n$-valued Schwartz space of (polynomially) rapidly decreasing functions. Consequently, $\mathcal{F}(\mathcal{B}^{b}_{\pm,n})$ are bounded subsets of $\mathcal{S}(\mathbb{R},E_n)$ as well.

 The modified holomorphic Fourier-Laplace transform 
\[
F_{n}^{v}(z)=\begin{cases}\displaystyle
\int_{-\infty}^{\infty} \varphi_{+}(t)f^{v}_{n}(t)e^{iz t}d t= \mathcal{F}(e_{-\Im m \: z}\varphi_{+}f^{v}_{n})(\Re e\: z), \qquad &\Im m\:z\geq\lambda_n, \\
\displaystyle -\int_{-\infty}^{\infty} \varphi_{-}(t)f^{v}_{n}(t)e^{iz t}d t=-\mathcal{F}(e_{-\Im m \: z}\varphi_{-}f^{v}_{n})(\Re e\: z), \qquad &\Im m\:z\leq-\lambda_n,
\end{cases}
\]
then represents the tempered ultrahyperfunction $\widehat{f^{v}_{n}}\in \mathcal{U}'(\mathbb{C},E_n)$. Let 
\[
C_{n,k}= \sup_{v\in B}\sup_{|\Im m\: z|\in[\lambda_n,\lambda_n+k]} (1+|\Re e\:z|)^{k}\|F^{v}_{n}(z)\|_{E_n}, \qquad n,k \in \N.
\]
Notice that each $C_{n,k}<\infty$ due to the boundedness of $\mathcal{F}(\mathcal{B}^{k}_{\pm,n})$ in $\mathcal{S}(\mathbb{R},E_n)$. Define
\[
\tau_n (t)= \max_{k<t}( k\log (1+t) - \log C_{n,k})
\]
on $[0,\infty)$. We employ Lemma \ref{l: DM w} to produce a weight function $\sigma$ such that $\sigma(t)=o(\tau_{n}(t))$, for each $n\in\mathbb{N}$. We then apply Lemma \ref{l: DM we} to find an entire function $Q$ satisfying the bounds \eqref{eq: DM we} with respect to $\sigma$. 

The lower bound from \eqref{eq: DM we} implies that $1/Q\in\mathcal{U}(\mathbb{C})$. Hence, there is $\chi\in\mathcal{K}(\mathbb{R})$ such that $\widehat{\chi}=1/Q$.  By construction, we see that $(Q\cdot F^{v}_n)_{n\in\mathbb{N}}$ gives rise to an element $\widehat{h^{v}}\in\mathcal{U}'(\mathbb{C},E)$ as its analytic representation. The $E$-valued distributions ${h}^{v}= ({h}^{v}_{n})_{n\in\mathbb{N}}\in\mathcal{K}'(\mathbb{R},E)$ turn out to be elements of $\mathcal{O}_{C}^{\exp}(\mathbb{R}, E)$ and can indeed explicitly be computed from $(Q\cdot F^{v}_n)_{n\in\mathbb{N}}$, 
\begin{align*}
{h}^{v}_{n}(t)= &- \frac{1}{2\pi} \oint_{\Gamma_{{\lambda_n}}}Q(\zeta)F^{v}_{n}(\zeta)e^{-i t\zeta}d\zeta
\\
&
= \frac{e^{{\lambda_n} t}}{2\pi} \int_{-\infty}^{\infty}Q(\xi+i{\lambda_n})F^{v}_{n}(\xi+i{\lambda_n})e^{-i t\xi}d\xi 
\\
& \qquad \qquad  \qquad \qquad  \qquad -\frac{e^{-{\lambda_n} t}}{2\pi} \int_{-\infty}^{\infty}Q(\xi-i{\lambda_n})F^{v}_{n}(\xi-i{\lambda_n})e^{-i t\xi}d\xi.
\end{align*}
Hence, for each $n,k\in\mathbb{N}$,
\begin{align*}
\sup_{v\in B}&\sup_{t\in\mathbb{R}}\max_{0\leq j\leq k}e^{-{\lambda_n}|t|}\|(h^{v}_{n})^{(j)}(t)\|_{E_n}
\\
&\leq A_n(1+{\lambda_n})^{k} \sup_{v\in B}\sup_{\xi\in\mathbb{R}} (1+|\xi|)^{k+2}e^{\sigma(\xi)}(\| F^{v}_{n}(\xi+i{\lambda_n})\|_{E_n}+\| F^{v}_{n}(\xi-i{\lambda_n})\|_{E_n})<\infty,
\end{align*}
for certain $A_n>0$,
in view of the upper bound \eqref{eq: DM we} and the choice of $\sigma$. This shows that $\{h^{v}:\: v\in B\}$ is a bounded subset of $\mathcal{O}_{C}^{\exp}(\mathbb{R}, E)$.

Our considerations so far yield the orbit factorization $\gamma_{v}= \chi\ast h^{v}$ 
 for each $v\in B$, and in particular $v=\int_{-\infty}^{\infty}\check{\chi}(t)h^{v}(t)dt$, where we use the notation $\check{\chi}$ for its reflection about the origin, that is, $\check{\chi}(t)=\chi(-t)$. Moreover, $\{h^{v}: \: v\in B\}$ is a bounded subset of $\mathcal{O}_{C}^{\exp}(\mathbb{R}, E)$. We claim that $B'=\{h^{v}(0):\: v\in B\}\subset E^{\infty}$ is bounded and that $B=\Pi(\check{\chi})(B')$. In view of Lemma  \ref{l: DM bounded sets}, these two claims would follow at once if we show that $\gamma_{h^{v}(0)}= h^{v}$ for each $v\in B$. So, let $v\in B$. We denote by $T(t)$ the translation operators, namely, $(T(t)f)(s)=f(s-t)$. Let $t\in\mathbb{R}$ be fixed but arbitrary. We show the equality $\pi(t)h^{v}=T(
 -t)h^{v}$, which yields the wanted relation after evaluating both functions at 0 (and then letting $t$ vary). We warn the reader we perform a couple of computations below in the larger ultrahyperfunction space $\mathcal{U}'_{(\sigma)}(\mathbb{C},E)$.  In fact, it should be noticed that the multiplication of elements of $\mathcal{U}'(\mathbb{C},E)$ by $Q$ is not well-defined in general; however, $\mu\mapsto Q\cdot \mu$ is a well-defined\footnote{Indeed, $\psi\mapsto Q\cdot \psi$ is a continuous linear operator $\mathcal{U}_{(\sigma)}(\mathbb{C})\to \mathcal{U}_{(\sigma)}(\mathbb{C}) $, so that we may define the multiplication by $Q$ on  $\mathcal{U}'_{(\sigma)}(\mathbb{C},E)$ via standard transposition: $\langle Q\cdot \mu, \psi\rangle=\langle \mu, Q\cdot \psi\rangle $. } continuous linear operator 
$\mathcal{U}'_{(\sigma)}(\mathbb{C},E)\to\mathcal{U}'_{(\sigma)}(\mathbb{C},E)$. Consequently,  $\widehat{h^{v}}= Q\cdot \widehat{\gamma_{v}}$, which is a well-defined multiplication in $\mathcal{U}'_{(\sigma)}(\mathbb{C},E)$, but also happens to be an element of $\mathcal{U}'(\mathbb{C},E)$. We then have
\[
\pi(t)h^{v}= \pi(t) \mathcal{F}^{-1}(Q\cdot \mathcal{F}({\gamma_{v}}) )= \mathcal{F}^{-1}(Q\cdot\mathcal{F}( \pi(t){\gamma_{v}}) )=\mathcal{F}^{-1}(Q\cdot\mathcal{F}( T(-t){\gamma_{v}}) )=T(-t)h^{v}.
\]

This completes the proof of Proposition \ref{prop: DM R}.

\end{proof}

\section{Exponential coordinates of second kind and the algebra $\mathcal{K}(G)$}
\label{sec exp coordinates}

The second main ingredient in our proof of Theorem \ref{th: fth} will be showing that the pullback
of $\mathcal{K}(G)$ by suitable global exponential coordinates of the second kind \cite{F-LBook} contains the corresponding algebra $\mathcal{K}(\mathfrak{g})$ of exponentially rapidly decreasing functions on the Lie algebra $\mathfrak{g}$.  The latter is the content of Theorem \ref{pushforward} below. We study in this section features  of the special coordinates making such a result possible. 

Since $\mathfrak{g}$ is solvable, the Lie subalgebra $\mathfrak{n}:=[\mathfrak{g},\mathfrak{g}]$ is nilpotent. It is shown in \cite[p. 135]{F-LBook} that one can select a so-called coexponential basis to $\mathfrak{n}$ in $\mathfrak{g}$, that is, a  basis $\{A_1,\dots,A_n\}$ of a subspace complementary to $\mathfrak{n}$ in $\mathfrak{g}$ such that $t_1A_1+\dots +t_nA_n+X\to \exp(t_1A_1)\dots \exp(t_nA_n)\exp(X)$ becomes a diffeomorphism from the Lie algebra $\mathfrak{g}= \mathbb{R}A_1\oplus \dots \oplus\mathbb{R}A_n\oplus\mathfrak{n}$ onto $G$. If we now pick a Malcev basis \cite{C-GBook,F-LBook} $\{N_1,\dots,N_m\}$ for the nilpotent Lie algebra $\mathfrak{n}$, we obtain the following result at once.

\begin{lemma}\label{Malcev}
    The map $\Phi:\mathfrak{g}\to G$ given by
    $$t_1A_1+\dots+t_nA_n+s_1N_1+\dots+s_mN_m\mapsto \exp(t_1A_1)\dots\exp(t_nA_n)\exp(s_1N_1)\dots\exp(s_mN_m)$$
    is a diffeomorphism of smooth manifolds.
\end{lemma}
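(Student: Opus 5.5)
The map $\Phi$ factors as a composition of two maps, each of which is already a diffeomorphism. Write $\mathfrak{a}=\mathbb{R}A_1\oplus\dots\oplus\mathbb{R}A_n$, so that $\mathfrak{g}=\mathfrak{a}\oplus\mathfrak{n}$. Let $N=\exp(\mathfrak{n})$ be the connected Lie subgroup of $G$ with Lie algebra $\mathfrak{n}$. First, I show that $N$ is a closed, simply connected nilpotent subgroup and that $\exp|_{\mathfrak{n}}:\mathfrak{n}\to N$ is a diffeomorphism.

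Since $G$ is exponential, hence simply connected solvable, the commutator subgroup $[G,G]$ is closed and simply connected, and it coincides with the connected subgroup integrating $\mathfrak{n}=[\mathfrak{g},\mathfrak{g}]$. Being simply connected and nilpotent, it is an exponential group. I take this from \cite{F-LBook}; it is standard.

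Second, I invoke the Malcev basis property for simply connected nilpotent groups \cite{C-GBook,F-LBook}. The map
$$\psi:(s_1,\dots,s_m)\mapsto \exp(s_1N_1)\cdots\exp(s_mN_m)$$
is a diffeomorphism $\mathbb{R}^m\to N$, i.e.\ exponential coordinates of the second kind relative to a Malcev basis. Composing with the linear identification of $\mathfrak{n}$ with $\mathbb{R}^m$ via the basis $\{N_j\}$ gives a diffeomorphism $\beta:\mathfrak{n}\to N$.

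Third, I use the coexponential basis. The map
$$\alpha:t_1A_1+\dots+t_nA_n+X\mapsto \exp(t_1A_1)\cdots\exp(t_nA_n)\exp(X)$$
is a diffeomorphism $\mathfrak{a}\oplus\mathfrak{n}\to G$. Hence
$$\Phi(T+Y)=\alpha\bigl(T+(\exp|_{\mathfrak{n}})^{-1}(\beta(Y))\bigr), \qquad T\in\mathfrak{a},\ Y\in\mathfrak{n}.$$
Here $Y\mapsto(\exp|_{\mathfrak{n}})^{-1}(\beta(Y))$ is a diffeomorphism of $\mathfrak{n}$, being the composition of $\beta$ with the inverse of the diffeomorphism $\exp|_{\mathfrak{n}}:\mathfrak{n}\to N$. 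Therefore $\operatorname{id}_{\mathfrak{a}}\times$ (this map) is a diffeomorphism of $\mathfrak{g}$. Composing it with $\alpha$ shows that $\Phi$ is a diffeomorphism.

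The main point requiring care is the first step. One must check that $\exp(X)$ for $X\in\mathfrak{n}$ computed in $G$ agrees with the exponential of the subgroup $N$, which holds by naturality of $\exp$. One must also check that $N$ carries the subgroup's manifold structure smoothly embedded, so that smoothness in $N$ is the same as smoothness in $G$. This holds because $N=[G,G]$ is closed in a simply connected solvable group. Everything else is a formal composition of known diffeomorphisms.
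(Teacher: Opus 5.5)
Your proposal is correct and is essentially the paper's argument. The paper obtains the lemma ``at once'' from the coexponential-basis diffeomorphism of Fujiwara--Ludwig combined with the Malcev-basis diffeomorphism onto $N=\exp(\mathfrak{n})$; you make the composition $\Phi(T+Y)=\alpha\bigl(T+(\exp|_{\mathfrak{n}})^{-1}(\beta(Y))\bigr)$ explicit, and your first step could be shortened because $\exp:\mathfrak{g}\to G$ is already a global diffeomorphism, so $\exp(\mathfrak{n})$ is automatically a closed embedded submanifold and $\exp^{-1}\circ\beta$ is smooth.
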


We fix from now on the basis $\{A_1,\dots, A_n,N_1,\dots, N_{m}\}$ giving rise to the diffeomorphism $\Phi$ as in Lemma \ref{Malcev} and demonstrate a number of properties of $\Phi$. We start with a simple growth bound for $\Phi$. Recall $|\cdot|_{G}$ and $|\cdot|_{\mathfrak{g}}$ were introduced in Subsection \ref{subsec exp decreasing}.
\begin{lemma}\label{Garding}
    There exist constants $b,B>0$ such that, for any $X\in \mathfrak{g}$,
  \begin{equation}\label{bound phi}
  |\Phi(X)|_G\leq b|X|_{\mathfrak{g}}+B.
  \end{equation}
\end{lemma}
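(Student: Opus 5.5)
The plan is to use the subadditivity \eqref{eq submult} of $|\cdot|_G$ to reduce the estimate to one-parameter subgroups. For $X=t_1A_1+\dots+t_nA_n+s_1N_1+\dots+s_mN_m$ we have $\Phi(X)=\exp(t_1A_1)\cdots\exp(t_nA_n)\exp(s_1N_1)\cdots\exp(s_mN_m)$, so \eqref{eq submult} gives
\[
|\Phi(X)|_G\leq \sum_{j=1}^n|\exp(t_jA_j)|_G+\sum_{k=1}^m|\exp(s_kN_k)|_G .
\]
It therefore suffices to bound each term linearly in the corresponding coordinate.

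Fix $Y\in\mathfrak{g}$ and consider the curve $c(u)=\exp(uY)$, $u\in[0,t]$ (or $[t,0]$ when $t<0$). Because $Y$ is regarded as a left-invariant vector field and $c$ is its integral curve through $e$, we have $c'(u)=Y_{c(u)}$. The Riemannian metric is left-invariant, so $\|c'(u)\|=\|Y_e\|$ for every $u$. Hence the length of $c$ equals $|t|\,\|Y_e\|$, and
\[
|\exp(tY)|_G=d(e,\exp(tY))\leq |t|\,\|Y\|_e .
\]
Alternatively, one can avoid the geodesic picture: write $\exp(tY)=\exp((t/N)Y)^N$ with $N=\lceil |t|\rceil$ and apply \eqref{eq submult} together with $\sup_{|u|\leq1}|\exp(uY)|_G<\infty$, which holds by continuity. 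This gives $|\exp(tY)|_G\leq C_Y(|t|+1)$, and it is this form that produces the additive constant $B$.

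Combining the two steps, we obtain
\[
|\Phi(X)|_G\leq M\Big(\sum_j|t_j|+\sum_k|s_k|\Big)+B,
\]
where $M=\max_{j,k}\{\|A_j\|_e,\|N_k\|_e\}$ (or the $C_Y$'s). Since all norms on the finite-dimensional space $\mathfrak{g}$ are equivalent, $\sum_j|t_j|+\sum_k|s_k|\leq c\,|X|_{\mathfrak{g}}$. Here we use that $|\cdot|_{\mathfrak{g}}$ is a norm, as noted in Subsection \ref{subsec exp decreasing}. This yields \eqref{bound phi}.

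I do not expect a serious obstacle. The only point needing care is the identification $c'(u)=Y_{c(u)}$, which relies on the convention that elements of $\mathfrak{g}$ act as left-invariant fields. The subdivision argument avoids this identification entirely, so I would present that version.
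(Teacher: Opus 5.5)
Your proposal is correct and follows the paper's route: iterate the subadditivity \eqref{eq submult} to reduce to the one-parameter terms $|\exp(tY)|_G$, bound each of these linearly in $|t|$, and finish with equivalence of norms on $\mathfrak{g}$. The only difference is that the paper gets the linear bound by citing the Beurling--G\aa rding lemma on subadditive functions, while your subdivision argument is that lemma's proof written out for this case, and your curve-length argument gives the sharper bound $|\exp(tY)|_G\le |t|\,\|Y\|_e$.
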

\begin{proof} Rename $X_k:=A_k$, for $1\leq k\leq n$, and $X_{n+k}=N_k$, for $1\leq k\leq m$. Using \eqref{eq submult}, we obtain
$|\Phi(\sum_{k=1}^{n+m}t_kX_k)|_{G}\leq \sum_{k=1}^{n+m} |\exp(t_k X_j)|_{G}$. By the same reason, each continuous  function $ |\exp(t X_{j})|$ is subadditive in the variable $t$ and, by a classical lemma due to Beurling \cite{B38} (generalized to Lie groups by G\aa rding \cite[Lemme 3]{G60}), they are all bounded by a linear function of $t$. We thus conclude that, for some $B,b'>0$, the inequality $|\Phi(X)|_{G}\leq B+b' \sum_{k=1}^{n+m} |t_k|$ holds for any $X=\sum_{k=1}^{n+m} t_k X_k$. Since $|X|_{1}=\sum_{k=1}^{n+m}|t_k|$ is also a norm and all norms on $\mathfrak{g}$ are equivalent, we obtain that the bound \eqref{bound phi} must hold for some $b>0$.
\end{proof}

Denote by $\mathcal{O}_{M}^{\exp}(\mathfrak{g})$ the algebra of exponentially bounded functions on $\mathfrak{g}$ with exponentially bounded
derivatives\footnote{This function space is the exponential analog of the Schwartz space $\mathcal{O}_{M}(\mathfrak{g})$ of multipliers \cite{SBook} for the Schwartz class $\mathcal{S}(\mathfrak{g})$. }. That is,
    
       $$\mathcal{O}_{M}^{\exp}(\mathfrak{g})=\left\{f\in C^{\infty}(\mathfrak{g}):(\forall P\in\mathfrak{D}(\mathfrak{g}))( \exists \lambda>0)\sup_{X \in \mathfrak{g}} |P(f)(X)|e^{-\lambda|X|_{\mathfrak{g}}} < \infty\right\}$$
and let $\mathfrak{D}_{\exp}(\mathfrak{g})$ be the algebra of differential operators generated by $\mathcal{O}_{M}^{\exp}(\mathfrak{g})$ and $\mathfrak{D}(\mathfrak{g})$. By definition, the algebra $\mathcal{O}_{M}^{\exp}(\mathfrak{g})$ is $\mathfrak{D}(\mathfrak{g})$-stable; therefore, any element of $\mathfrak{D}_{\exp}(\mathfrak{g})$ can be written as a finite sum $\sum f_kP_k$, where each $f_k\in\mathcal{O}_{M}^{\exp}(\mathfrak{g})$ and $P_k\in\mathfrak{D}(\mathfrak{g})$. In other words, the algebra $\mathfrak{D}_{\exp}(\mathfrak{g})$ can be thought of as the algebra of linear partial differential operators with smooth coefficients that are exponentially bounded, along with all their derivatives.

The next proposition is crucial for us. We have, 

\begin{proposition}\label{pullback}
    For any $L\in \mathfrak{D}(G)$, we have $\Phi^{\ast}(L)\in\mathfrak{D}_{\exp}(\mathfrak{g})$ and $\Phi^{\ast}(\iota_{\ast}L)\in\mathfrak{D}_{\exp}(\mathfrak{g})$.
\end{proposition}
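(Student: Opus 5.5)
Since $\mathfrak{D}(G)$ is generated as an algebra by $\mathfrak{g}$, and $\iota_\ast$ is an algebra (anti)homomorphism onto the right-invariant operators, which are generated by the right-invariant vector fields $\iota_\ast X$, $X\in\mathfrak{g}$, and since $\Phi^\ast$ is an algebra homomorphism and $\mathfrak{D}_{\exp}(\mathfrak{g})$ is an algebra, it suffices to prove the claim for first-order operators. That is, for each basis element $Y\in\{A_1,\dots,A_n,N_1,\dots,N_m\}$ we must show that the vector fields $\Phi^\ast(Y)$ and $\Phi^\ast(\iota_\ast Y)$ on $\mathfrak{g}\cong\R^{n+m}$ have coefficients in $\mathcal{O}^{\exp}_M(\mathfrak{g})$.

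Write $g=\Phi(t,s)=a_1(t_1)\cdots a_n(t_n)\,\nu(s)$ with $a_k(t_k)=\exp(t_kA_k)$ and $\nu(s)=\exp(s_1N_1)\cdots\exp(s_mN_m)$. The right-invariant field $\iota_\ast Y$ corresponds, up to sign, to left translation by $\exp(\epsilon Y)$. For left-invariant $Y$ we compute $\frac{d}{d\epsilon}\Phi^{-1}(g\exp(\epsilon Y))|_{\epsilon=0}$. The partial derivatives of $\Phi$ are
\[
\partial_{t_k}\Phi = \text{left-translate of } \operatorname{Ad}\bigl((a_{k+1}\cdots a_n\nu)^{-1}\bigr)A_k,
\]
and similarly for $\partial_{s_j}$. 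In the left-trivialization, the Jacobian of $\Phi$ is therefore a matrix $M(t,s)$ whose entries are entries of $\operatorname{Ad}(h)$ for products $h$ of partial factors. Two facts hold about it:
\begin{itemize}
\item The entries of $\operatorname{Ad}(\exp(tA))=e^{t\operatorname{ad}A}$ are exponential polynomials in $t$. On $\mathfrak n$, the matrices $e^{s\operatorname{ad}N}$ are unipotent, hence polynomial in $s$.
\item $\det M$ equals the modular-type factor $\prod_k\det(e^{-t_k\operatorname{ad}A_k}|\ldots)$ up to a constant. It is a product of exponentials of linear forms, because the Jacobian is block triangular relative to $\mathfrak g=\operatorname{span}(A_i)\oplus\mathfrak n$ with $\mathfrak n$ an ideal.
\end{itemize}
The coefficients of $\Phi^\ast Y$ are then $M^{-1}$ applied to the constant vector $Y$. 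The coefficients of $\Phi^\ast(\iota_\ast Y)$ are $M^{-1}\operatorname{Ad}(g^{-1})Y$, where $\operatorname{Ad}(g^{-1})$ is again a product of $e^{-t_k\operatorname{ad}A_k}$ and unipotent factors.

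It then remains to verify that all coefficients lie in $\mathcal{O}^{\exp}_M$. Exponential polynomials in $(t,s)$ (finite sums of polynomials times $e^{\langle\alpha,t\rangle}$) form an algebra contained in $\mathcal{O}^{\exp}_M(\mathfrak g)$ that is stable under differentiation. Entries of $M$ and $\operatorname{Ad}(g^{-1})$ are of this kind. By Cramer's rule, the entries of $M^{-1}$ are cofactors, which are exponential polynomials, divided by $\det M$. Since $\det M$ is a pure exponential $ce^{\langle\beta,t\rangle}$, with no polynomial or sum, its reciprocal is again an exponential polynomial. Hence all coefficients are exponential polynomials, and the claim follows.

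The main obstacle is establishing that $\det M$ (equivalently, the inverse Jacobian) is a single exponential and not a general exponential polynomial, which could vanish or decay in a way that is not controlled exponentially. The approach is to exploit the triangular structure:
\begin{itemize}
\item Order the coexponential basis so that the subspaces $\mathfrak g_k=\operatorname{span}(A_{k},\dots,A_n)\oplus\mathfrak n$ are subalgebras, each an ideal in the next. This is the construction in \cite[p.~135]{F-LBook}.
\item Along $\mathfrak n$, use a Malcev basis adapted to the $\operatorname{ad}A_k$-action.
\item In this adapted ordering, $M$ becomes triangular modulo the flag. Its diagonal entries are either $1$ (unipotent part) or the scalars by which $e^{-t_j\operatorname{ad}A_j}$ acts on the one-dimensional quotients $\mathfrak g_k/\mathfrak g_{k+1}$. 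These scalars are $e^{-t_j\lambda}$ with $\lambda$ real, since exponential groups have no purely imaginary roots forcing oscillation.
\end{itemize}
If a fully triangular ordering is not available, the fallback is to invert $M$ blockwise. The $\mathfrak n$-block is unipotent-polynomial times exponentials, and the $A$-block comes from a chain of codimension-one ideals, so its inverse can be written explicitly.
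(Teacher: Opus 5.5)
Your argument is correct, and after the common reduction to generators $X\in\mathfrak{g}$ it differs from the paper's. The paper never forms or inverts a Jacobian. It differentiates $r\mapsto\Phi^{-1}(\Phi(x)\exp(rX))$ directly. For $X=N_j$ it uses the polynomial group law in Malcev coordinates. For $X=A_i$ it conjugates $\exp(rA_i)$ back to the $i$-th slot, so the correction $\exp\bigl((\mathrm{Ad}_{g^{-1}}-\mathrm{Id})(-rA_i)+o(r)\bigr)$ lies in $\exp\mathfrak{n}$ and is absorbed polynomially into the $s$-coordinates. The right-invariant case is left to the reader. You instead compute the forward left-trivialized Jacobian $M$, whose entries are entries of $\mathrm{Ad}$, and invert it by Cramer's rule. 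This treats $\Phi^\ast Y=M^{-1}Y$ and $\Phi^\ast(\iota_\ast Y)=-M^{-1}\mathrm{Ad}(g^{-1})Y$ on the same footing. It also gives the sharper fact that all coefficients are exponential polynomials, and yields Proposition~\ref{Haar} immediately from $\det M$. The price is the determinant step, which you rightly single out as the crux.

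That step is simpler than your sketch, and part of the sketch should be dropped. The key fact, which the paper also uses, is that $\mathrm{Ad}(h)-\mathrm{Id}$ maps $\mathfrak{g}$ into $[\mathfrak{g},\mathfrak{g}]=\mathfrak{n}$. Hence the $\operatorname{span}(A_i)$-component of the column $\mathrm{Ad}(h_k^{-1})A_k$ is exactly $A_k$. The $\mathfrak{n}$-columns $\mathrm{Ad}\bigl((\exp(s_{j+1}N_{j+1})\cdots\exp(s_mN_m))^{-1}\bigr)N_j$ contain no $A$-factors. They are unipotent triangular for the Malcev basis already built into $\Phi$, which is the same triangularity the paper uses for $\Phi^\ast N_j$. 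So $M=\begin{pmatrix}\mathrm{Id}&0\\ \ast&U\end{pmatrix}$ with $U$ unipotent, and $\det M\equiv 1$. Consequently:
\begin{itemize}
\item No reordering of the coexponential basis is needed, since every subspace containing $\mathfrak{n}$ is automatically an ideal.
\item No appeal to the absence of purely imaginary roots is needed, since the scalars on $\mathfrak{g}_k/\mathfrak{g}_{k+1}$ are all $1$.
\item There is no nontrivial modular factor in the left trivialization.
\end{itemize}
In particular, do not ask for a Malcev basis adapted to $\mathrm{ad}A_k$. A real flag with one-dimensional $\mathrm{ad}A_k$-invariant quotients need not exist for exponential groups, because $\mathrm{ad}A_k$ may have eigenvalues $\alpha\pm i\beta$ with $\alpha\neq0$. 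Re-choosing bases would also change the fixed map $\Phi$. For the same reason, your exponential polynomials must allow complex exponents (equivalently, factors $\cos(\beta t)$ and $\sin(\beta t)$). This is harmless for membership in $\mathcal{O}^{\exp}_M(\mathfrak{g})$.
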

\begin{proof} Note that $\Phi^{\ast}(L_1L_2)= \Phi^{\ast}(L_1)\Phi^{\ast}(L_2)$. In view of the classical Poincar\'{e}-Birkhoff-Witt theorem \cite[Theorem~1.2.7, p.~23]{F-LBook} and the well-known fact that one may identify $\mathfrak{D}(G)$ with the universal enveloping algebra of $\mathfrak{g}$, it suffices to verify that $\Phi^{\ast}(X)\in \mathfrak{D}_{\exp}(\mathfrak{g})$ and $\Phi^{\ast}(\iota_{\ast}X)\in\mathfrak{D}_{\exp}(\mathfrak{g})$ 
for all left-invariant vector fields $X\in \mathfrak{g}$. Both proofs are completely similar, we shall therefore only consider here the pullback of a left-invariant vector field $X\in\mathfrak{g}$ and leave the case of $\iota_{\ast}X$ to the reader.
We endow $\mathfrak{g}$ with the coordinates $ (t_1,\dots,t_n,s_1,\dots,s_m)$ arising from the basis $\{A_1,\dots,A_n,N_1,\dots,N_m\}$. For the remainder of the proof, we may  identify $\mathfrak{g}$ with $\mathbb{R}^{n+m}$ via these coordinates. Under this identification, the algebra $\mathfrak{D}(\mathfrak{g})$ simply corresponds to the usual algebra of differential operators with constant coefficients $\mathbb{R}[\partial_{t_1},\dots,\partial_{t_n},\partial_{s_1},\dots,\partial_{s_m}]$. For any $X\in \mathfrak{g}$, there exist smooth functions $\phi_i^X$ and $\psi_j^X$ in $C^{\infty}(\mathfrak{g})$ such that
    $$\Phi^*X=\sum_{i=1}^{n}\phi^X_i\partial_{t_i}+\sum_{j=1}^{m}\psi^X_j\partial_{s_j}$$
    where, since $X$ is a left-invariant vector field, we can compute the left-hand side via
    $$\Phi^{\ast}X(t_1,\ldots,t_n,s_1,\ldots,s_n)=\left.\frac{d}{dr}\right|_{r=0}\Phi^{-1}(\Phi(t_1,\dots,t_n,s_1,\dots,s_m)\cdot\exp(r X)),$$    
    and the functions $\phi^X_i$ and $\psi^X_j$ correspond to the components of the $\R^{n+m}$-valued function that appear on the right-hand side of the previous formula. We only need to prove each $\phi^X_i$ and $\gamma^X_j$ belongs to $\mathcal{O}_{M}^{\exp}(\mathfrak{g})$, and it is enough to do so for $X$ equal to each of the elements in $\{A_1,\dots,A_n,N_1,\dots,N_m\}$.
    
    If $X=N_j$ (under our identification, $X=e_{n+j}$), using that $\{N_1,\dots,N_m\}$ is a Malcev basis for $\mathfrak{n}$, we get
    \begin{align*}
        &\Phi^{-1}\left(\Phi(t_1,\dots,t_n,s_1,\dots,s_m)\cdot\exp(0,\dots,0,r,0\dots,0)\right)\\
        =&\Phi^{-1}\left(\exp(t_1A_1)\cdots\exp(t_nA_n)\exp(s_1N_1)\cdots\exp(s_mN_m)\exp(rN_j)\right)\\
        =&\Phi^{-1}\left(\exp(t_1A_1)\cdots\exp(s_{j-1}N_{j-1})\exp(p_j^j(s_j,\dots,s_m,r)N_j)\cdots\exp(p_m^j(s_j,\dots,s_m,r)N_m)\right)\\
        =&(t_1,\dots,s_{j-1},p_{j}^j(s_j,\dots,s_m,r),\dots,p_m^j(s_j,\dots,s_m,r)),
    \end{align*}
    where the functions $p_k^j$ are polynomials in
    $m-j+2$  the variables  $s_j,\ldots,s_m,r$, and moreover $p_j^j=s_j+r$. From this we obtain
    $$\Phi^{\ast}N_j=(0,\dots,0,1, \left.\frac{d}{dr}\right|_{r=0}p_{j+1}^j ,\dots,\left.\frac{d}{dr}\right|_{r=0}p_m^j)$$
     Each of the functions $\frac{d}{dr}|_{r=0}p_k^j$ is again a polynomial, and therefore an element of $\mathcal{O}_{M}^{\exp}(\mathfrak{g})$. We conclude $\Phi^{\ast}N_j\in\mathfrak{D}_{\exp}(\mathfrak{g})$.\par
    If $X=A_i$ (that is, $X=e_i$), we can write
    \begin{align*}
        &\Phi^{-1}\left(\Phi(t_1,\dots,t_n,s_1,\dots,s_m)\cdot\exp(0,\dots,0,r,0\dots,0)\right)\\
        =&\Phi^{-1}\left(\left(\prod_{k=1}^{n}\exp(t_kA_k)\prod_{l=1}^{m}\exp(s_lN_l)\right)\exp(rA_i)\right)\\
        =&\Phi^{-1}\left(\left(\prod_{k=1}^{i-1}\exp(t_kA_k)\right)\exp((t_i+r)A_i)\exp(-rA_i)g\exp(rA_i)\right)
    \end{align*}
    where $g=\prod_{k=i+1}^n\exp(t_kA_k)\prod_{l=1}^m\exp(s_lN_l)$. From the definition of the adjoint action,  
    we see that
    $$
    \exp(-rA_i)g=g\exp(\text{Ad}_{g^{-1}}(-rA_i))
    $$
    and, from an approximative version of the Baker-Campbell-Haussdorff formula (see for instance \cite[Theorem 1.1.19(1)]{F-LBook}),
    \begin{align*}
    \exp(\text{Ad}_{g^{-1}}(-rA_i))\exp(rA_i)&=\exp(\text{Ad}_{g^{-1}}(-rA_i)+rA_i+o(r))\\
    &=\exp((\text{Ad}_{g^{-1}}-\text{Id})(-rA_i)+o(r). 
    \end{align*}
    As an operator on $\mathfrak{g}$, $\text{Ad}_{g^{-1}}$ is given by
    \begin{align*}
        \text{Ad}_{g^{-1}}&=\text{Ad}_{\exp(-s_mN_m)}\circ\dots \circ\text{Ad}_{\exp(-t_{i+1}A_{i+1})}\\
        &=\exp(\text{ad}_{-s_mN_m})\circ\dots\circ \exp(\text{ad}_{-t_{i+1}A_{i+1}})\\
        &=\exp(-s_m\text{ad}_{N_m})\circ\dots\circ \exp(-t_{i+1}\text{ad}_{A_{i+1}}).
    \end{align*}
    Let $\|\cdot\|_{\text{op}}$ be the operator norm on the space $L(\mathfrak{g},\mathfrak{g})$. For any $X\in\mathfrak{g}$ and $t\in\mathbb{R}$, we have
    \begin{align*}
        \|\exp(-t\  \text{ad}_{X})\|_{\text{op}}&\leq \exp\left(|t|\ \|\text{ad}_{X}\|_{\text{op}}\right),
    \end{align*}
    which means all the functions that appear in the matrices that represent the operators $\exp(-s_l\text{ad}_{N_l})$ and $\exp(-t_k\text{ad}_{A_k})$, in the basis we are working with, are exponentially bounded. Since we also have 
    $$\frac{\partial}{\partial_{s_l}}\exp(-s_l\text{ad}_{N_l})=- \text{ad}_{N_l}\circ \exp(-s_l\text{ad}_{N_l}),$$
    then all these functions not only are exponentially bounded but actually belong to $\mathcal{O}_{M}^{\exp}(\mathfrak{g})$. The same argument holds for the functions in each of the variables $t_k$ appearing in the matrices that represent the operators $\exp(-t_k\text{ad}_{A_k})$. Since $\mathcal{O}_{M}^{\exp}(\mathfrak{g})$ is an algebra, we conclude the functions that appear in the matrix form of $\text{Ad}_{g^{-1}}$ all belong to $\mathcal{O}_{M}^{\exp}(\mathfrak{g})$. Moreover, the image of $\text{Ad}_{g^{-1}}-\text{Id}$ is contained in $\mathfrak{n} = [\mathfrak{g}, \mathfrak{g}]$. We obtain:
    $$\exp((\text{Ad}_{g^{-1}}-\text{Id})(-rA_i)+o(r))=\exp\left(r\sum_{l=1}^mf_{l,i}(t_{i+1},\dots,s_m)N_l+o(r)\right),$$
    where the $f_l$ are functions that belong to $\mathcal{O}_{M}^{\exp}(\mathfrak{g})$. If we put all this together we conclude that
    $$\Phi^{\ast}A_i=(0,\dots,1,\dots,0,q_{1,i}(f_{1,i},\dots,f_{m,i}),\dots, q_{m,i}(f_{1,i},\dots,f_{m,i})),$$
    where the $q_j$ are polynomials. Therefore, each entry is again a function in $\mathcal{O}_{M}^{\exp}(\mathfrak{g})$. We conclude $\Phi^{\ast}A_i\in\mathfrak{D}_{\exp}(\mathfrak{g})$, as desired.
\end{proof}

\begin{remark}
    In fact, our construction above proves that the functions $q_{l,i}(f_{1,i},\dots,f_{m,i})$ are analytic. Therefore, the pullbacks $\Phi^{\ast}(\mathfrak{D}(G))$ and $\Phi^{\ast}(\iota_{\ast}\mathfrak{D}(G))$ are differential operators with analytic coefficients that are also exponentially bounded, along with all their derivatives. We will not make use this fact in the remainder of the article. 
\end{remark}

As a consequence of the explicit formulas written down in the proof of the previous lemma, we obtain the following proposition which will be used in the next section.
\begin{proposition}\label{Haar}
    Let $\,d\mu$ be a Haar measure on $\mathfrak{g}$, then $\Phi_{\ast}(\,d\mu)$ is also a Haar measure on $G$.
\end{proposition}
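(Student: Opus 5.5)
Since $\Phi$ is a diffeomorphism, $\Phi_{\ast}(d\mu)$ is a smooth positive measure on $G$. Hence it has the form $J\,dg$, where $J>0$ is smooth and $dg$ is the fixed left Haar measure. It suffices to show that $J$ is constant. Equivalently, the pullback $\Phi^{\ast}(dg)$ is a constant multiple of Lebesgue measure $dt_1\cdots dt_n\,ds_1\cdots ds_m$ in the coordinates of Lemma \ref{Malcev}.

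The plan is to test invariance under the generators. The group $G$ is generated by the one-parameter subgroups $\exp(rA_i)$ and $\exp(rN_j)$. So it is enough to show that $\Phi_{\ast}(d\mu)$ is invariant under right translation by each of these; invariance under a generating set gives invariance under all of $G$. Using right translations gives right invariance, so we must still check that $G$ is unimodular for this to be the left Haar measure. An exponential solvable group need not be unimodular (e.g. the $ax+b$ group), so right invariance is the wrong target. I would therefore use left translations instead. By the symmetry remark in the proof of Proposition \ref{pullback}, the computation is the same with $\Phi(\cdot)\exp(rX)$ replaced by $\exp(rX)\Phi(\cdot)$. Infinitesimally, left invariance amounts to showing that the right-invariant vector fields $\iota_{\ast}X$, pulled back by $\Phi$, are divergence free with respect to Lebesgue measure. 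In other words, the flows $Y\mapsto \Phi^{-1}(\exp(rX)\Phi(Y))$ must preserve $d\mu$.

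For $X=A_1$ the flow is simply $t_1\mapsto t_1+r$, a translation, so it preserves Lebesgue measure. For general $X$, I would instead exploit the nested structure. Write $G=\exp(\mathbb{R}A_1)\cdots\exp(\mathbb{R}A_n)N$ with $N=\exp\mathfrak{n}$, and decompose the Haar measure successively. For a closed normal subgroup (or a subgroup giving a homogeneous space with invariant measure), the Haar measure factors as Haar on the quotient times Haar on the subgroup. Applying this iteratively along the chain of subgroups $G_k=\exp(\mathbb{R}A_k)\cdots\exp(\mathbb{R}A_n)N$ from Lemma \ref{Malcev} gives the result, provided each $G_{k+1}$ is normal in $G_k$ with $G_k/G_{k+1}\cong\mathbb{R}$. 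This can be arranged since $\mathfrak{n}\supset[\mathfrak{g},\mathfrak{g}]$ and every subspace containing $\mathfrak{n}$ is an ideal. For the nilpotent part, the Malcev coordinates $s_j$ give Haar measure $ds_1\cdots ds_m$, which is standard (\cite{C-GBook}): the flows computed in the proof of Proposition \ref{pullback} are triangular with diagonal entry $s_j\mapsto s_j+r$ and polynomial entries depending only on later variables, so their Jacobians equal $1$.

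The main obstacle is the modular factor. In $G=G_k$ written as $\exp(tA_k)\,h$ with $h\in G_{k+1}$, the left Haar measure on the semidirect product $\mathbb{R}\ltimes G_{k+1}$ is $dt\,dh$ only up to the factor $\det(\mathrm{Ad}_{\exp tA_k}|_{\mathfrak{g}_{k+1}})$. This factor appears for one ordering of the product and not the other. With the ordering $\exp(tA)h$ and left translations, I expect $d(\exp(tA)h)=dt\,dh$ to be exactly left invariant. I would verify this directly: left multiplication by $\exp(t'A)$ shifts $t$; left multiplication by $h'\in G_{k+1}$ sends $(t,h)$ to $(t,\mathrm{Ad}$-conjugate of $h'$ times $h)$, which preserves $dh$ by left invariance of $dh$. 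By induction this proves that $\Phi_{\ast}(d\mu)$ is a left Haar measure. If the orientation conventions turned out to be reversed, I would fall back on the case of right Haar measure and adjust accordingly.
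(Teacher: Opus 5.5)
Your argument is correct once the false starts are removed, but it takes a different route from the paper.

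\textbf{The paper's route.} It argues infinitesimally and reuses the formulas from the proof of Proposition \ref{pullback}:
$\Phi^{\ast}A_i=\partial_{t_i}+\sum_{l}q_{l,i}\,\partial_{s_l}$ and $\Phi^{\ast}N_j=\partial_{s_j}+\sum_{k>j}(\cdots)\,\partial_{s_k}$.
So the left-invariant frame differs from the coordinate frame by a block-triangular matrix with unipotent diagonal blocks. Hence the dual left-invariant coframe wedges to $dt_1\wedge\dots\wedge ds_m$.

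\textbf{Your route.} You argue at the group level.
\begin{itemize}
\item You write $G$ as an iterated semidirect product along $G_k=\exp(\mathbb{R}A_k)G_{k+1}$, with $G_{n+1}=N$.
\item You use the fact that if $g=hk$ with $k$ in a normal subgroup, the image of the product of left Haar measures is left invariant. Your check $h'\exp(tA)h=\exp(tA)\,\bigl(\exp(-tA)h'\exp(tA)\bigr)\,h$ is exactly the right computation. It already shows the ordering $\exp(tA_k)h$ gives the left Haar measure, so your closing hedge about ``orientation conventions'' is unnecessary.
\item You take the Malcev-coordinate description of Haar measure on $N$ as the base case.
\end{itemize}

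\textbf{What each buys.} Your route avoids the Baker--Campbell--Hausdorff expansion and the $\operatorname{Ad}$-matrix bookkeeping, and it shows transparently why no modular factor appears. The paper's route is a two-line corollary once Proposition \ref{pullback} is available, and it handles the nilpotent part internally rather than by citation.

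\textbf{What you should add or drop.}
\begin{itemize}
\item State that the product set $\exp(\mathbb{R}A_k)\cdots\exp(\mathbb{R}A_n)N$ is the closed normal subgroup $\exp(\mathfrak{g}_k)$, where $\mathfrak{g}_k=\operatorname{span}(A_k,\dots,A_n)\oplus\mathfrak{n}$.
\item State that $(t,h)\mapsto\exp(tA_k)h$ is a diffeomorphism $\mathbb{R}\times G_{k+1}\to G_k$.
\item Both of these follow from Lemma \ref{Malcev} by projecting to the abelian quotient $G/\exp(\mathfrak{g}_k)\cong\mathbb{R}^{k-1}$.
\item Your sketch of the base case via the flows $\Phi(\cdot)\exp(rN_j)$ proves right invariance on $N$. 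That suffices only because $N$ is nilpotent and hence unimodular, which you should say explicitly given your earlier concern.
\item The generating-set and divergence-free discussion in your second paragraph is never used and can be dropped.
\end{itemize}
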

\begin{proof}
Note that $\{A_1,\dots,N_m\}$ is a basis of the tangent bundle of $G$. As usual, choose $\{\partial_{t_1},\dots,\partial_{s_m}\}$ to be the corresponding basis for the tangent bundle of $\mathfrak{g}$. The formulas of the previous lemma imply that the change-of-basis matrix relating the bases $\{\partial_{t_1},\dots,\partial_{s_m}\}$ and $\{\Phi^{\ast}A_1,\dots,\Phi^{\ast}N_m\}$ is given by
 $$M:=\left(\begin{array}{cc}
        \text{Id} & 0 \\
        Q & T
    \end{array}\right)$$
    where the entries of $Q$ are given by the functions $q_{l,j}(f_{1,j},\dots,f_{m,j})$ that appear in the proof of Proposition \ref{pullback} and $T$ is a lower triangular matrix with $1$'s in its diagonal. 
    Therefore, its determinant is equal to $1$.
    
    Let $\{A_1^{\ast},\dots,N_m^{\ast}\}$ be invariant $1$-forms dual to $\{A_1,\dots,N_m\}$. The change-of-basis matrix between this basis of the cotangent bundle of $G$ and $\{\Phi_{\ast}(dt_1),\dots,\Phi_{\ast}(ds_m)\}$ is given by the transpose of $M$, which thus has determinant 1. Hence
    $$\Phi_{\ast}(\,dt_1\wedge\dots\wedge\,ds_m)=A_1^{\ast}\wedge\dots\wedge N_m^{\ast}.$$
Therefore, the pushforward of a Haar measure is again a Haar measure.
\end{proof}

We can now prove the main result of this section. \begin{theorem}\label{pushforward} We have
    $$\Phi_{\ast}(\mathcal{K}(\mathfrak{g}))\subset \mathcal{K}(G).$$
\end{theorem}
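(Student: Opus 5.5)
Let $f\in\mathcal{K}(\mathfrak{g})$ and set $\varphi=\Phi_{\ast}f=f\circ\Phi^{-1}\in C^{\infty}(G)$. We must show that for every $\lambda>0$ and every $L\in\mathfrak{D}(G)$ the quantities $\sup_{g\in G}e^{\lambda|g|_G}|(L\varphi)(g)|$ and $\sup_{g\in G}e^{\lambda|g|_G}|((\iota_\ast L)\varphi)(g)|$ are finite. We pass to the Lie algebra via $\Phi$. Since $\Phi$ is a diffeomorphism, $(L\varphi)\circ\Phi=\Phi^{\ast}(L)f$, and similarly for $\iota_\ast L$. Hence
\[
\sup_{g\in G}e^{\lambda|g|_G}|(L\varphi)(g)|=\sup_{X\in\mathfrak{g}}e^{\lambda|\Phi(X)|_G}\,|(\Phi^{\ast}(L)f)(X)|.
\]

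First, Lemma \ref{Garding} gives $e^{\lambda|\Phi(X)|_G}\le e^{\lambda B}e^{\lambda b|X|_{\mathfrak{g}}}$, so the weight on $G$ is dominated by an exponential weight on $\mathfrak{g}$. Second, Proposition \ref{pullback} lets us write $\Phi^{\ast}(L)=\sum_k f_kP_k$ as a finite sum with $f_k\in\mathcal{O}^{\exp}_M(\mathfrak{g})$ and $P_k\in\mathfrak{D}(\mathfrak{g})$. In particular, for each $k$ there is $\lambda_k>0$ with $|f_k(X)|\le C_ke^{\lambda_k|X|_{\mathfrak{g}}}$; here we only need the zeroth-order bound on the coefficients, not bounds on their derivatives. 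Combining these,
\[
e^{\lambda|\Phi(X)|_G}|(\Phi^{\ast}(L)f)(X)|\le e^{\lambda B}\sum_k C_k\, e^{(\lambda b+\lambda_k)|X|_{\mathfrak{g}}}|(P_kf)(X)|.
\]
Each term on the right is bounded, because $f\in\mathcal{K}(\mathfrak{g})$ and $P_k$ is a constant-coefficient (that is, left-invariant) operator on the abelian group $\mathfrak{g}$, so $P_kf$ decays faster than any exponential. The right-invariant case $\iota_\ast L$ is handled identically, using the second assertion of Proposition \ref{pullback}. This gives $\varphi\in\mathcal{K}(G)$.

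I do not expect a genuine obstacle here. The substantive work was already done in Proposition \ref{pullback}, namely the exponential control of the coefficients, and in Lemma \ref{Garding}, the linear comparison of $|\Phi(X)|_G$ with $|X|_{\mathfrak{g}}$. The only points needing care are notational. On $\mathfrak{g}$, left- and right-invariant operators coincide with the constant-coefficient operators, so the definition of $\mathcal{K}(\mathfrak{g})$ reduces to the decay bounds used above. One should also fix the convention that $\Phi_{\ast}f$ means $f\circ\Phi^{-1}$, which is consistent with the pullback identity $(L\varphi)\circ\Phi=\Phi^{\ast}(L)(\varphi\circ\Phi)$.
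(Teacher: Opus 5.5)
Your proof is correct and is essentially the paper's argument. You rewrite the suprema on $G$ as suprema on $\mathfrak{g}$ via $\Phi$, dominate $e^{\lambda|\Phi(X)|_G}$ by Lemma \ref{Garding}, and use Proposition \ref{pullback} to see that $\Phi^{\ast}(L)f$ and $\Phi^{\ast}(\iota_{\ast}L)f$ decay faster than any exponential. The only difference is that you unpack the paper's appeal to the $\mathfrak{D}_{\exp}(\mathfrak{g})$-module property of $\mathcal{K}(\mathfrak{g})$ by writing $\Phi^{\ast}(L)=\sum_k f_kP_k$, and you rightly note that only zeroth-order exponential bounds on the coefficients $f_k$ are needed.
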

\begin{proof}
    One readily verifies that the action of differential operators on functions turns $\mathcal{K}(\mathfrak{g})$ into a $\mathfrak{D}_{\exp}(\mathfrak{g})$-module, that is, $P\varphi \in\mathcal{K}(\mathfrak{g})$ whenever 
 $\varphi\in \mathcal{K}(\mathfrak{g})$ and $P\in\mathfrak{D}_{\exp}(\mathfrak{g})$. 
 
    Let $f\in \mathcal{K}(\mathfrak{g})$, $L\in\mathfrak{D}(G)$, and fix a real number $\lambda>0$. We have
    \begin{align*}
        \sup_{g\in G}e^{\lambda|g|_G}(|&L(\Phi_*(f))(g)|+|(\iota_{\ast}L)(\Phi_*(f))(g)|)
        \\
        &=\sup_{X\in\mathfrak{g}}e^{\lambda |\Phi(X)|_G}(|(\Phi^*L)(f)(X)|+|(\Phi^*\iota_{\ast}L)(f)(X)|)\\
        &\leq e^{\lambda B}\sup_{X\in\mathfrak{g}}e^{\lambda b|X|_{\mathfrak{g}}}(|(\Phi^*L)(f)(X)|+|(\Phi^*\iota_{\ast}L)(f)(X)|)\\
        &<\infty,
    \end{align*}
 where $B$ and $b$ are the constants from Lemma \ref{Garding}, and, in the third line, we used that $(\Phi^*L)(f)\in \mathcal{K}(\mathfrak{g})$ and $(\Phi^*\iota_{\ast}L)(f)\in \mathcal{K}(\mathfrak{g})$ in view of Proposition \ref{pullback} and the $\mathfrak{D}_{\exp}(\mathfrak{g})$-stability of $\mathcal{K}(\mathfrak{g})$. The conclusion is thus that $\Phi_{\ast}f\in \mathcal{K}(G)$.
\end{proof}

\section{Factorization on exponential solvable Lie groups}\label{sec factorization exponential}
Armed with Proposition \ref{prop: DM R}, Proposition \ref{pullback}, and Theorem \ref{pushforward}, we can now deduce Theorem \ref{th: fth} via an iteration argument that goes back to Dixmier and Malliavin \cite{D-M}.

\begin{proof}[Proof of Theorem \ref{th: fth}]
Restricting each $\pi(g)$ to the invariant space $E^{\infty}$, we may just assume that the Fr\'{e}chet space satisfies $E^{\infty}=E$. 

Given $X\in\mathfrak{g}$, we can induce a representation $\pi_{X}$ of $\mathbb{R}$ on $E$ via the $L(E^{\infty},E^{\infty})$-valued one-parameter group
\[
\pi_{X}(t)= \pi(\exp(tX)), \qquad t\in\mathbb{R}.
\]
Since every vector is smooth for $\pi$, we get $E^\infty=E^{\infty}_{\pi_X}$, where the latter denotes the space of smooth vectors of $\pi_{X}$. The equality $E^\infty=E^{\infty}_{\pi_X}$ not only holds as sets, but also topologically, thanks to the open mapping theorem for Fr\'{e}chet spaces. 

Let us consider the diffeomorphism $\Phi$ from Lemma \ref{Malcev} associated with the basis $\{A_1,\dots, A_n,N_1,\dots, N_m\}$ of $\mathfrak{g}$ and write $d\mu$ for the Haar measure of $\mathfrak{g}$ given as the volume form $dt_1\wedge\dots\wedge\,ds_m$ in the coordinates with respect to this fixed basis. Note that $\Phi_{\ast}(d\mu)=c dg$ for some $c>0$
in view of Proposition \ref{Haar}.  Given a bounded subset $B\subset E^{\infty}=E^{\infty}_{\pi_{N_{m}}}$, by Proposition \ref{prop: DM R}, we can find a bound subset $B_{m+n}\subset E^{\infty}_{\pi_{N_{m}}}=E^{\infty}$ and a function $\varphi_{n+m}\in\mathcal{K}(\mathbb{R})$ such that $B=\Pi(\varphi_{m+n})(B_{m+n})$. Iterating this procedure $m+n-1$ more times, we find a bounded subset $B'\subset E^{\infty}_{\pi_{A_{1}}}=E^{\infty} $ and functions $\varphi_1,\dots,\varphi_{m+n-1}\in\mathcal{K}(\mathbb{R})$ such that
\begin{equation}
\label{eq facto iterative}
 B=\Pi_{A_1}(\varphi_{1}) \cdots \Pi_{A_{n}}(\varphi_n) \Pi_{N_{1}}(\varphi_{n+1}) \cdots \Pi_{N_{m}}(\varphi_{n+m})(B').
\end{equation}
Let now \[
\varphi\left(\sum_{j=1}^{n} t_{j}A_j + \sum_{k=1}^{m} s_k N_k\right)=\varphi_{1}(t_1)\cdots \varphi_{n}(t_n)\varphi_{n+1}(s_1) \cdots \varphi_{n+m}(s_m).\]
Clearly, $\varphi\in \mathcal{K}(\mathfrak{g})$. If $v\in B$, then \eqref{eq facto iterative} tells us that there is some $v'\in B'$ such that
\begin{align*}
v&= \int_{\mathbb{R}^{n+m}}\varphi_{1}(t_1)\cdots \varphi_{n+m}(s_m) \pi(\exp(t_1A_1)\cdots \exp(s_m N_m))v' dt_1\cdots ds_m\\
&=
 \int_{\mathfrak{g}}\varphi(X) \pi(\Phi(X))v'  d\mu(X)
 \\
 &
 = c \int_{G} [\Phi_{\ast}(\varphi)(g)] \pi(g)v'  dg
 \\
 &
 = \Pi[c\Phi_{\ast}(\varphi)] v'.
\end{align*}
Consequently, $B=\Pi(\chi)(B')$ with $\chi= c\Phi_{\ast}(\varphi)$ and $\chi\in \mathcal{K}(G)$ by Theorem \ref{pushforward}, which concludes the proof of the factorization theorem.
\end{proof}


\begin{thebibliography}{99}

\bibitem{B-K} J.~ Bernstein, B.~Kr\"{o}tz, \emph{Smooth Fréchet globalizations of Harish-Chandra modules,
}  Israel J. Math. \textbf{199}  (2014), 45--111.

\bibitem{B38} A.~Beurling, \emph{Sur les int\'{e}grales de Fourier absolument convergentes et leur application \`{a} une transformation fonctionelle,} in: IX Congr. Math. Scand., pp. 345--366, Helsingfors, 1938.
 
    \bibitem{BMT} 
   
    R.~W.~Braun, R.~Meise, B.~A.~Taylor, \emph{Ultradifferentiable functions and Fourier analysis}, Results Math. \textbf{17} (1990), 206--237.

\bibitem{C} W.~Casselman, \emph{Canonical extensions of Harish-Chandra modules to representations of
$G$,} Canadian J. Math. \textbf{41} (1989), 385--438.


\bibitem{C-GBook}L.~J.~Corwin, F.~P.~Greenleaf, \emph{Representations of nilpotent Lie groups and their
applications. Part I,} Cambridge University Press,  Cambridge, 1990.

    \bibitem{D-H-V25} A.~Debrouwere, M.~Huttener, J.~Vindas, \emph{Strong factorization of ultradifferentiable vectors associated with compact Lie group representations,} Int. Math. Res. Not. IMRN \textbf{2025} (2025), Article rnaf157.
   
    \bibitem{DPV21}
    A.~Debrouwere, B.~Prangoski, J.~Vindas, \emph{Factorization in Denjoy-Carleman classes associated to representations of \((\mathbb{R}^d,+)\)}, J. Funct. Anal. \textbf{280} (2021), Article 108831.

\bibitem{DV16} A.~Debrouwere, J.~Vindas, \emph{On the non-triviality of certain spaces of analytic functions. Hyperfunctions and ultrahyperfunctions of fast growth,} Rev. R. Acad.  Cienc. Exactas F\'{i}s. Nat. Ser. A. Math. RACSAM \textbf{112} (2018), 473--508.

\bibitem{DV21}A.~Debrouwere, J.~Vindas, \emph{Topological properties of convolutor spaces via the short time Fourier transform,} Trans. Amer. Math. Soc. \textbf{374} (2021), 829--861.

\bibitem{HPbook} R.~F.~Hoskins, J.~Sousa~Pinto, \emph{Theories of generalised functions. Distributions, ultradistributions
and other generalised functions,} Horwood Publishing Limited, Chichester, 2005.

    \bibitem{D-M} 
    J.~Dixmier, P.~Malliavin, \emph{Factorisations de fonctions et de vecteurs ind\'{e}finiment diff\'{e}rentiables}, Bull. Sci. Math. \textbf{102} (1978), 307--330.

    \bibitem{ehrenpreis}
    L.~Ehrenpreis, \textit{Solution of some problems of division. IV. Invertible and elliptic operators}, Amer. J. Math. \textbf{82} (1960), 522--588.

\bibitem{F-LBook}H.~Fujiwara, J.~Ludwig, \emph{Harmonic analysis on exponential solvable Lie groups,}  Springer, Tokyo, 2015.

\bibitem{G60} L.~G\aa rding, \emph{Vecteurs analytiques dans les repr\'{e}sentations des groupes de Lie,} Bull. Soc. Math.
France \textbf{88} (1960), 73--93.



    \bibitem{G-K-L} 
    H.~Gimperlein, B.~Kr\"{o}tz, C.~Lienau, \emph{Analytic factorization of Lie group representations}, J. Funct. Anal. \textbf{262} (2012), 667--681.



    \bibitem{Glo}
    H.~Gl\"{o}ckner, \emph{Continuity of \(LF\)-algebra representations associated to representations of Lie
    groups,} Kyoto J. Math. \textbf{53} (2013), 567--595.
    
    \bibitem{H} R.~Howe, \emph{On a connection between nilpotent groups and oscillatory integrals associated to singularities},  Pacific J. Math.\textbf{73} (1977), 329--363.

\bibitem{mv} R.~Meise, D.~Vogt, \emph{Introduction to functional analysis,} Oxford University Press, New York, 1997.

    \bibitem{rst} 
    L.~A.~Rubel, W.~A.~Squires, B.~A.~Taylor, \textit{Irreducibility of certain entire functions with applications to harmonic analysis}, Ann. of Math. \textbf{108} (1978), 553--567.

\bibitem{SBook}L.~Schwartz, \emph{Th\'{e}orie des distributions,} Hermann, Paris, 1966.

\bibitem{Silva58} J.~Sebasti\~{a}o~e~Silva, \emph{Les fonctions analytiques comme ultra-distributions dans le calcul
op\'{e}rationnel,} Math. Ann. \textbf{136} (1958), 58--96.


    \bibitem{yul} 
    R.~S.~Yulmukhametov, \textit{Solution of the L. Ehrenpreis problem on factorization}, Mat. Sb. \textbf{190} (1999), 123--157; translation in Sb. Math. \textbf{190} (1999), 597--629.

\end{thebibliography}
\end{document}